\RequirePackage{fix-cm}
\documentclass[10pt]{amsart}
\usepackage[utf8]{inputenc}
\usepackage[T1]{fontenc}

\usepackage{soul}

\usepackage{graphicx}
\usepackage{longtable}
\usepackage{float}
\usepackage{wrapfig}
\usepackage{textcomp}
\usepackage{marvosym}
\usepackage{latexsym}
\usepackage{amssymb}
\usepackage{xargs}
\usepackage[sort&compress,numbers]{natbib}
\usepackage{algorithm2e}
\usepackage{xcolor}
\usepackage{subcaption}
\definecolor{HalfGray}{gray}{0.55}
\definecolor{OliveGreen}{rgb}{0,.35,0}
\definecolor{webbrown}{rgb}{.6,0,0}
\definecolor{BrightViolet}{rgb}{0.5,0.2,0.8}
\definecolor{Maroon}{cmyk}{0, 0.87, 0.68, 0.32}
\definecolor{RoyalBlue}{cmyk}{1, 0.50, 0, 0.25}
\definecolor{Black}{cmyk}{0, 0, 0, 0}

\definecolor{ccccccc}{RGB}{204,204,204}
\definecolor{c808080}{RGB}{128,128,128}
\definecolor{c999999}{RGB}{153,153,153}
\definecolor{ce6e6e6}{RGB}{230,230,230}

\usepackage[unicode]{hyperref}
\hypersetup{
colorlinks=true,
linktocpage=true,
pdfstartview=FitH,
breaklinks=true,
pdfpagemode=UseNone,
pageanchor=true,
pdfpagemode=UseOutlines,
plainpages=false,
bookmarksnumbered,
bookmarksopen=false,
bookmarksopenlevel=1,
hypertexnames=true,
pdfhighlight=/O,
urlcolor=Maroon, linkcolor=RoyalBlue, citecolor=OliveGreen, 
pdftitle={},
pdfauthor={},
pdfsubject={},
pdfkeywords={},
pdfcreator={pdfLaTeX},}
\tolerance=1000
\usepackage{pgfplots}
\usepackage{amsthm,amsmath,enumerate,bbm,tikz,tabularx,multicol,environ}
\usetikzlibrary{matrix,arrows,calc,patterns}
\usepgfplotslibrary{groupplots,dateplot}
\usetikzlibrary{patterns,shapes.arrows}
\pgfplotsset{compat=newest}

\makeatletter
\newsavebox{\measure@tikzpicture}
\NewEnviron{scaletikzpicturetowidth}[1]{%
  \def\tikz@width{#1}%
  \begin{lrbox}{\measure@tikzpicture}%
  \BODY
  \end{lrbox}%
  \pgfmathparse{#1/\wd\measure@tikzpicture}%
  \BODY
}
\makeatother

\setlength{\multicolsep}{\parsep}
\newtheorem{theorem}{Theorem}[section]
\newtheorem{proposition}[theorem]{Proposition}
\newtheorem{corollary}[theorem]{Corollary}
\newtheorem{lemma}[theorem]{Lemma}
\theoremstyle{definition}
\newtheorem{definition}[theorem]{Definition}
\newcounter{assump}
\setcounter{assump}{0}
\newtheorem{assumption}[assump]{Assumption}

\theoremstyle{remark}
\newtheorem{remark}[theorem]{Remark}

\DeclareMathOperator*{\argmin}{arg\,min}

\renewcommand{\leq}{\leqslant}
\renewcommand{\geq}{\geqslant}
\renewcommand{\le}{\leqslant}

\newcommand{\expec}[2]{\mathbb{E}_{#1}\left[ #2\right]}

\newcommand{\m}{\mathbb}

\newcommand{\E}{\mathbb{E}}
\newcommand{\si}{\sigma}
\newcommand{\ga}{\gamma}

\newcommand{\la}{\lambda}

\author{Joon Kwon, Yijun Wan \& Bruno Ziliotto}
\date{\today}
\title[Time-dependent approachability]{Time-dependent Blackwell approachability and application to absorbing games}
\begin{document}

\begin{abstract}
  Blackwell's approachability~\citep{blackwell1954controlled,blackwell1956analog} is a very general online learning framework where a Decision Maker obtains vector-valued outcomes, and aims at the convergence of the average outcome to a given ``target'' set. Blackwell gave a sufficient condition for the decision maker having a strategy guaranteeing such a convergence against an adversarial environment, as well as what we now call the Blackwell's algorithm, which then ensures convergence. Blackwell's approachability has since been applied to numerous problems, in regret minimization and game theory, in particular. We extend this framework by allowing the outcome function and the inner product to be time-dependent. We establish a general guarantee for the natural extension to this framework of Blackwell's algorithm. In the case where the target set is an orthant, we present a family of time-dependent inner products which yields different convergence speeds for each coordinate of the average outcome. We apply this framework to absorbing games (an important class of stochastic games) for which we construct $\varepsilon$-uniformly optimal strategies using Blackwell's algorithm in a well-chosen auxiliary approachability problem, thereby giving a novel illustration of the relevance of online learning tools for solving games.


\end{abstract}
\maketitle

\section{Introduction}
\label{sec:introduction}

The fundamental von Neumann's minimax theorem characterizes the
highest payoff that each player of a finite two-player zero-sum game
can guarantee. 
\citet{blackwell1954controlled,blackwell1956analog} proposed a
surprising extension of this result for repeated games with
vector-valued outcomes. For a given set called the target, he gave a
sufficient condition for the player having a strategy that guarantees
convergence in average of the outcomes to the target.
This condition is also necessary for convex sets. When the condition
is satisfied, the original algorithm proposed by Blackwell, as well as
many recent alternatives~\citep{hart2001general,gordon2007no,abernethy2011blackwell,kwon2021refined,shimkin2016online,grand2022solving,dann23pseudonorm}, do guarantee such a convergence.

This approachability framework has since found numerous applications in
online learning and game theory---see \citet{perchet2014approachability}
for a survey. The very first application was an alternative solution
to the fundamental sequential decision problem called regret
minimization
\citep{blackwell1954controlled,hannan1957approximation,cesa2006prediction}.
It has since been noticed that many variants and extensions of regret
minimization are also special cases, e.g.\ internal/swap
regret~\citep{stoltz2005internal,blum2005external}, with variable stage
duration~\citep{mannor2008regret}, with sleeping experts
\citep{blum2005external}, with fairness constraints
\citep{chzhen2021unified}, with global costs~\citep{even2009online},
etc. Further important areas of application include online resource
allocation problems such as scheduling~\citep{hou2009admission},
capacity pooling~\citep{zhong2018resource}, and reinforcement learning
\citep{mannor2004geometric,miryoosefi2019reinforcement,yu2021provably,li2021blackwell}.
A recent paper also proposed an interesting application of Blackwell's
algorithm to efficiently solving saddle-point
problems~\citep{grand2022solving}.

Blackwell's approachability has also been applied to game theory in
various ways. It was first observed that regret minimization can be
used to iteratively compute game solutions: \citet{freund1999adaptive}
constructively proved the von Neumann's minimax theorem using the
exponential weights algorithm. More generally, \citet{hart2000simple}
used Blackwell's approachability to define an algorithm
called \emph{Regret Matching} and showed convergence to the set of correlated
equilibria. Regret Matching turned out to be very effective in practice
and modern variants achieve state-of-the-art performance in solving large games
such as poker~\citep{zinkevich2007regret,tammelin2015solving,farina2020faster}.
Further applications include repeated games~\citep{mannor2009approachability}, with partial
monitoring~\citep{perchet2011approachability,mannor2011robust,mannor2013primal,mannor2014set}, with incomplete
information~\citep{kohlberg1975optimal} and coalitional
games~\citep{raja2020approachability}. Approachability was
also studied in the context of stochastic games with vector valued payoffs~\citep{FLP16,ragel22}.

Another important example of an (online) learning tool that has been successfully applied to games is reinforcement learning~\citep{samuel1959some}, which has been 
declined into multi-agent reinforcement learning in the context of stochastic games~\citep{littman1994markov}. Reinforcement learning methods typically need each state to be visited infinitely often. 
By contrast, we present a novel application of (an extension of) Blackwell's approachability to the important class of \emph{absorbing games}, for which such assumption cannot hold, because of irreversible state transitions. 

Zero-sum stochastic games~\citep{shapley1953stochastic} feature two
players who repeatedly play a zero-sum game with an outcome function
that depends on a state variable. The state follows a
Markov chain controlled by both players. In the $T$-stage game, the payoff function is the expected average payoff over $T$ stages, and we call $v_T$ its value.
A large part of the literature studies the asymptotic properties of this model
as the number of stages $T$ grows to infinity---see \citet{MSZ,sorin02b,SZ16,solan22} for general references. 
A subclass that has been intensively studied is \textit{absorbing games}, where the state can move at most once during the game, thus in an irreversible fashion. When the state space and the action sets are finite, \citet{kohlberg74} proved the
convergence of $v_T$ as $T\to + \infty$~\citep{BK76}---the limit $v_\infty$ is
called \textit{limit value}. Moreover, he proved the existence of the
\textit{uniform value}~\citep{MN81}, i.e.\  for each $\varepsilon>0$,
Player I (resp.\ Player II) has a strategy that guarantees $v_\infty-\varepsilon$
(resp.\ $v_\infty+\varepsilon$), for any sufficiently large $T$. Strategies satisfying such a property are called \textit{$\varepsilon$-uniform optimal}. The existence of the limit value and uniform value were generalized to any stochastic game with finite state space and finite action sets, respectively in \citet{BK76} and \citet{MN81}. The limit value may fail to exist when the state space is infinite~\citep{ziliotto16} or when one of the action sets is infinite~\citep{vigeral13}, even under strong topological assumptions. The situation is more positive in the case of absorbing games. Indeed, as far as the existence of limit value and uniform value is concerned, absorbing games with infinite state space can easily be reduced to a finite-state space setting. Moreover, absorbing games with compact action sets and separately continuous payoff and transition functions have a uniform value~\citep{MNR09}. The latter result was refined in \citet{HIR21}, who proved that there exist $\varepsilon$-uniform optimal strategies that can be generated by a finite-state space automaton with transition functions that only depend on the stage of the game. 

In this paper, we introduce an extension of
Blackwell's approachability framework where the outcome function and the inner product vary with time and apply it to obtain a new proof of the existence of the limit value and uniform value in absorbing games with compact action sets. As a result, we obtain a construction of $\varepsilon$-uniformly optimal  strategies that is fairly different from the ones in \citet{MNR09} and \citet{HIR21}. The reader can find a detailed comparison between the constructions in Subsection~\ref{sec:comp}. To our knowledge, this is the first time that approachability is used to design $\varepsilon$-uniformly optimal strategies. 
The connection between approachability and uniform value builds on the following simple idea. An $\varepsilon$-uniformly optimal strategy should induce good stage payoffs while ensuring that the state does not get absorbed into a bad state. This dual objective can be recast as an approachability problem with a two-dimensional vector payoff function. 
 The simplicity and generality of such an idea make it amenable to promising extensions, and we hope
that these tools and ideas will lead to a systematic approach for
solving other stochastic games and sequential decision problems.
In addition, our work constitutes another illustration of the relevance of online learning tools for solving games.
Moreover, possible applications of our framework to regret minimization are mentioned in Section~\ref{sec:conclusion}.

\subsection{Contributions and Summary}
\label{sec:contributions-summary}

We introduce in Section~\ref{sec:gener-appr-probl} an extension of
Blackwell's approachability framework where the outcome function and
the inner product vary with time. We define the associated Blackwell's
condition and Blackwell's algorithm.

In Section~\ref{sec:analysis}, we establish a general bound on the
distance of the average outcome to the target set,
measured by the time-dependent inner product. The bound holds as soon as
Blackwell's condition is satisfied and depends on the norm of each
past outcome, where outcome vector at time $t\geqslant 1$ is measured
with the corresponding norm at time $t$---this feature being essential in
the application to absorbing games.

In the case where the target set is an orthant, Corollary~\ref{cor:1}
proposes a family of time-dependent inner product which yields different
convergence speeds for each coordinate of the average outcome vector.

In Sections~\ref{sec:absorbing-games} to~\ref{sec:result-strat-play}, we recall the definition of absorbing
 games and showcase the above tools and results by applying them to the construction of $\varepsilon$-uniformly optimal strategies. 

\subsection{Related Work}
\label{sec:related-work}

Our presentation of Blackwell's approachability focuses on the case of
target sets which are closed convex cones, as in
\citet{abernethy2011blackwell} where the properties of the convex cone are
used to convert regret minimization algorithms
into approachability ones. Time-dependent outcome functions
also appear in \citet{lee2021online} with the restriction that they are
convex-concave.

\subsection{Notation}
\label{sec:notation} Let $d\geqslant 1$ be an integer.
For a vector $u\in \mathbb{R}^d$, we denote $u=(u^{(1)},\dots,u^{(d)})$ its components.

\section{A Generalized Approachability Framework}
\label{sec:gener-appr-probl}
We define an extension of Blackwell's approachability framework and
condition, and establish a guarantee for the corresponding Blackwell's
algorithm. Our approach allows the outcome function and the
inner product to vary with time.

Let $\mathcal{A},\mathcal{B}$ be action sets (with no particular
structure) for the Decision Maker and
Nature respectively. Let $d\geqslant 1$ be an integer and
$\mathcal{R}$ a subset of $(\mathbb{R}^d)^{\mathcal{A}\times \mathcal{B}}$.

The interaction goes as follows. At time $t\geqslant 1$,
\begin{itemize}
\item Nature chooses outcome function $\rho_t\in \mathcal{R}$ and reveals it to the Decision Maker,
\item the Decision Maker chooses action $a_t\in \mathcal{A}$,
\item Nature chooses action $b_t\in \mathcal{B}$,
\item outcome vector $r_t:=\rho_t(a_t,b_t)\in \mathbb{R}^d$ is revealed to the Decision Maker.
\end{itemize}

Formally, an algorithm of the Decision Maker is a sequence of maps
$(\sigma_t)_{t\geqslant 1}$ where for $t\geqslant 1$,
$\sigma_t:(\mathbb{R}^{d})^{t-1}\times \mathcal{R}^t\to \mathcal{A}$. For a given such
algorithm, a sequence of outcome functions $(\rho_t)_{t\geqslant 1}$ in $\mathcal{R}$ and a
sequence of Nature's actions $(b_t)_{t\geqslant 1}$ in
$\mathcal{B}$, the actions of the Decision Maker are then defined for
$t\geqslant 1$ as:
\begin{align*}
a_t&= \sigma_t(r_1,\dots,r_{t-1},\rho_1,\dots,\rho_t)\\
  &= \sigma_t\left( \rho_1(a_1,b_1),\dots,\rho_{t-1}(a_{t-1},b_{t-1}) ,\rho_1,\dots,\rho_t\right). 
\end{align*}

Our construction and analysis can be extended to general
closed convex target sets as discussed in Section~\ref{sec:extens-gener-clos}, we here restrict to closed convex cones for simplicity.
\begin{definition}
A nonempty subset $\mathcal{C}\subset \mathbb{R}^d$ is a \emph{closed convex
cone} if it is closed and if for all $z,z'\in \mathcal{C}$ and
$\lambda\in \mathbb{R}_+$, it holds that $z+z'\in \mathcal{C}$ and
$\lambda z\in \mathcal{C}$.
\end{definition}
\begin{remark}
It is immediate to verify that a closed convex cone is indeed convex.
Consequently, the orthogonal projection onto a closed convex
cone (with respect to any inner product) is well-defined.
\end{remark}

Let $\mathcal{C}\subset \mathbb{R}^d$ be a closed convex cone called
the \emph{target} and
$(\left< \,\cdot\,, \,\cdot\, \right>_{(t)} )_{t\geqslant 1}$ a
sequence of inner products in $\mathbb{R}^d$.
For $t\geqslant 1$, denote by $\left\|\,\cdot\,\right\|_{(t)}$ and $\pi_{(t)}^{\mathcal{C}}$
the associated Euclidean norm and the associated Euclidean projection
onto $\mathcal{C}$ respectively. In other words, for $r\in \mathbb{R}^d$,
\begin{align*}
\left\| r \right\|_{(t)}&=\left< r, r \right>_{(t)}^{1/2},\\
  \pi_{(t)}^{\mathcal{C}}(r)&=\argmin_{r'\in \mathcal{C}}\left\| r'-r \right\|_{(t)}.
\end{align*}

We now present the extension of Blackwell's condition~\citep{blackwell1954controlled,blackwell1956analog} to our framework.
\begin{definition}[Blackwell's condition]
$\mathcal{C}$ satisfies \emph{Blackwell's condition} (with respect to
$\mathcal{R}$ and
$(\left< \,\cdot\,, \,\cdot\, \right>_{(t)})_{t\geqslant 1}$) if for
all $t\geqslant 1$, $\rho\in \mathcal{R}$, $r\in \mathbb{R}^d$, there
exists $a\left[ t,\rho,r \right]\in \mathcal{A}$ such that for all $b\in \mathcal{B}$,
\[ \left< \rho\left(a\left[ t,\rho,r \right],b  \right), r-\pi_{(t)}^{\mathcal{C}}(r) \right>_{(t)}\leqslant 0. \]
The above map
\[ \begin{array}{rccc}
a:&\mathbb{N}^*\times \mathcal{R}\times \mathbb{R}^d &\longrightarrow  & \mathcal{A}\\
& (t,\rho,r)&\longmapsto  & a\left[ t,\rho,r \right]
\end{array} \]
is called an \emph{oracle} associated with $\mathcal{C}$,
$\mathcal{R}$ and $( \left< \,\cdot\,, \,\cdot\, \right>_{(t)} )_{t\geqslant 1}$.
\end{definition}

\begin{definition}[Blackwell's algorithm]
Let $\mathcal{C}$ satisfying Blackwell's
condition (with respect to $\mathcal{R}$ and
$(\left< \,\cdot\,, \,\cdot\, \right>_{(t)} )_{t\geqslant 1}$), $a$
an associated oracle, and $a_1\in \mathcal{A}$. The corresponding \emph{Blackwell's algorithm}
is defined as $\sigma_1=a_1$ and for all $t\geqslant 2$,
$r_1,\dots,r_{t-1}\in \mathbb{R}^d$ and $\rho_1,\dots,\rho_t\in \mathcal{R}$,
\[ \sigma_t(r_1,\dots,r_{t-1},\rho_1,\dots,\rho_{t})=a\left[ t,\rho_t, \sum_{s=1}^{t-1}r_s \right].  \]
\end{definition}
In other words, for a given sequence of outcome functions
$(\rho_t)_{t\geqslant 1}$ in $\mathcal{R}$ and a sequence of Nature's actions
$(b_t)_{t\geqslant 1}$ in $\mathcal{B}$, Blackwell's algorithm
yields the following actions for the Decision Maker:
\[ a_t=a\left[ t,\rho_t,\sum_{s=1}^{t-1}\rho_s(a_s,b_s) \right],\quad t\geqslant 2.  \]
\begin{remark}
For the Blackwell's algorithm to be used by the Decision Maker, the
latter needs to know the outcome function $\rho_t$ and the inner product
$\left< \,\cdot\,, \,\cdot\, \right>_{(t)}$ before choosing its action of stage
$t\geqslant 1$, as the oracle depends on both.
\end{remark}

In the case where the action sets are convex with $\mathcal{A}$ compact, and the
 outcome functions are bi-affine, an equivalent Blackwell's \emph{dual condition} is given below and is often easier to verify, but does not explicitly provide an oracle ---there are however alternative approachability algorithms based on the dual condition~\citep{bernstein2015response}. 
An interesting consequence of the following is that whether a closed convex target set $\mathcal{C}$ satisfies Blackwell's condition does not depend on the sequence of inner products $(\left< \,\cdot\,, \,\cdot\, \right>_{(t)})_{t\geqslant 1}$.

\begin{proposition}[Blackwell's dual condition]
\label{prop:dual}
Assume that $\mathcal{A}$ and $\mathcal{B}$ are convex sets of finite dimensional vectors
spaces, such that $\mathcal{A}$ is compact and all outcome functions $\rho\in \mathcal{R}$ are bi-affine.
Then, a closed convex cone $\mathcal{C}\subset \mathbb{R}^d$ satisfies Blackwell's condition
with respect to $\mathcal{R}$ and $(\left< \,\cdot\,, \,\cdot\, \right>_{(t)})_{t\geqslant 1}$ if, and only if,
\begin{equation}
\label{eq:9}
\tag{$\ast$}
\forall \rho\in \mathcal{R},\ \forall b\in \mathcal{B},\ \exists a\in \mathcal{A},\quad \rho(a,b)\in \mathcal{C}.  
\end{equation}
\end{proposition}
\begin{proof}
  Let $t\geqslant 1$. We first introduce the polar cone of $\mathcal{C}$ associated with
  inner product $\left< \,\cdot\,, \,\cdot\, \right>_{(t)}$:
\[ \mathcal{C}^\circ_{(t)}:=\left\{ z\in \mathbb{R}^d\,\middle|\, \forall r\in \mathcal{C},\ \left< r, z \right>_{(t)}\leqslant 0 \right\}.  \]
If $\mathcal{C}$ is a closed convex cone, an important property is that $(\mathcal{C}^\circ_{(t)})^\circ_{(t)}=\mathcal{C}$,
in other words $r$ belongs to $\mathcal{C}$ if, and only if $\max_{z\in \mathcal{C}^\circ }\left< r, z \right>_{(t)}\leqslant 0$.
Moreover, Moreau's decomposition theorem states that
$r=\pi_{(t)}^{\mathcal{C}}(r)+\pi_{(t)}^{\mathcal{C}^\circ_{(t)}}(r)$ for all $r\in \mathbb{R}^d$. As an immediate consequence, it holds that
$\left\{ r-\pi_{(t)}^{\mathcal{C}}(r) \right\}_{r\in \mathbb{R}^d}=\mathcal{C}^\circ_{(t)}$. 

Then, Blackwell's condition can be written
\[ \forall t\geqslant 1,\ \forall \rho\in \mathcal{R},\quad  \max_{z\in \mathcal{C}^\circ_{(t)} }\min_{a\in \mathcal{A}}\max_{b\in \mathcal{B}}\left< \rho(a,b), z \right>_{(t)} \leqslant 0. \]
Since the quantity $\left< \rho(a,b), z \right>_{(t)}$ is affine in each of the
variables $a$, $b$, and $z$, all three sets are convex, and $\mathcal{A}$ is
compact, we can apply Sion's minimax theorem twice, and equivalently
write:
\[ \forall t\geqslant 1,\ \forall \rho\in \mathcal{R},\quad  \max_{b\in \mathcal{B}}\min_{a\in \mathcal{A}}\max_{z\in \mathcal{C}^\circ_{(t)} }\left< \rho(a,b), z \right>_{(t)} \leqslant 0, \]
which is exactly the dual condition \eqref{eq:9}.
\end{proof}


\section{Analysis}
\label{sec:analysis}
We first give a general guarantee for Blackwell's algorithm. 
\begin{theorem}
\label{thm:approachability-guarantee}
If the family of norms $(\left\|\,\cdot\,\right\|_{(t)})_{t\geqslant 1}$ is nonincreasing (meaning \(\left\|\,\cdot\,\right\|_{(t+1)}\leqslant \left\|\,\cdot\,\right\|_{(t)}\) for all \(t\geqslant 1\)), and $\mathcal{C}$ satisfies Blackwell's
condition (with respect to $\mathcal{R}$ and
$(\left< \,\cdot\,, \,\cdot\, \right>_{(t)} )_{t\geqslant 1}$),
then Blackwell's algorithm, associated with a corresponding oracle, guarantees for all $T\geqslant 1$,
\[ \min_{r\in \mathcal{C}}\left\| \frac{1}{T}\sum_{t=1}^{T}r_t-r \right\|_{(T)}\leqslant \frac{1}{T}\sqrt{\sum_{t=1}^T\left\| r_t \right\|_{(t)}^2}. \]
\end{theorem}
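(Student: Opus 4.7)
Denote $R_t=\sum_{s=1}^t r_s$ (with $R_0=0$), and set
$d_t^2:=\|R_t-\pi_{(t)}^{\mathcal{C}}(R_t)\|_{(t)}^2$ for $t\geqslant 0$ (so $d_0=0$).
The plan is to show by induction on $t$ that
$d_t^2\leqslant \sum_{s=1}^t\|r_s\|_{(s)}^2$, and then use the fact
that $\mathcal{C}$ is a cone to convert this into the claimed bound.
Indeed, since $T\mathcal{C}=\mathcal{C}$, the substitution $r=s/T$ gives
\[ \min_{r\in \mathcal{C}}\left\|\tfrac{1}{T}R_T-r\right\|_{(T)}
=\tfrac{1}{T}\min_{s\in \mathcal{C}}\left\|R_T-s\right\|_{(T)}
=\tfrac{d_T}{T}, \]
so the main content of the theorem is the one-step recursion.

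The first step is to expand the squared norm with respect to
$\left<\,\cdot\,,\,\cdot\,\right>_{(t)}$ around the \emph{previous}
projection point $\pi_{(t)}^{\mathcal{C}}(R_{t-1})\in \mathcal{C}$:
\[ \|R_t-\pi_{(t)}^{\mathcal{C}}(R_{t-1})\|_{(t)}^2
=\|R_{t-1}-\pi_{(t)}^{\mathcal{C}}(R_{t-1})\|_{(t)}^2
+2\left< r_t, R_{t-1}-\pi_{(t)}^{\mathcal{C}}(R_{t-1})\right>_{(t)}
+\|r_t\|_{(t)}^2. \]
By the definition of Blackwell's algorithm and the oracle, $r_t=g_t(a_t,b_t)$
with $a_t=a[t,g_t,R_{t-1}]$, so Blackwell's condition makes the cross
term nonpositive. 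Since $\pi_{(t)}^{\mathcal{C}}(R_{t-1})\in \mathcal{C}$, the
left-hand side is at least $d_t^2$, yielding
\[ d_t^2\leqslant \|R_{t-1}-\pi_{(t)}^{\mathcal{C}}(R_{t-1})\|_{(t)}^2+\|r_t\|_{(t)}^2. \]

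The second step is to pass from the norm $\|\,\cdot\,\|_{(t)}$ back to
$\|\,\cdot\,\|_{(t-1)}$ on the first term. By the definition of
$\pi_{(t)}^{\mathcal{C}}$ as the $\|\,\cdot\,\|_{(t)}$-minimizer over $\mathcal{C}$
(and since $\pi_{(t-1)}^{\mathcal{C}}(R_{t-1})\in \mathcal{C}$),
\[ \|R_{t-1}-\pi_{(t)}^{\mathcal{C}}(R_{t-1})\|_{(t)}
\leqslant \|R_{t-1}-\pi_{(t-1)}^{\mathcal{C}}(R_{t-1})\|_{(t)}. \]
The monotonicity assumption $\|\,\cdot\,\|_{(t)}\leqslant \|\,\cdot\,\|_{(t-1)}$
then upgrades the right-hand side to $d_{t-1}$, giving
$d_t^2\leqslant d_{t-1}^2+\|r_t\|_{(t)}^2$. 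Telescoping from $t=1$ to $T$
with $d_0=0$ yields $d_T^2\leqslant \sum_{t=1}^T\|r_t\|_{(t)}^2$, which
combined with the cone identity above is exactly the announced bound.

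I expect the main subtle point to be the second step: the oracle guarantees
a nonpositive inner product only at the \emph{current} projection
$\pi_{(t)}^{\mathcal{C}}(R_{t-1})$ (with the current inner product), whereas what
one naturally compares across stages is $\pi_{(t-1)}^{\mathcal{C}}(R_{t-1})$
(with the previous one); bridging the two requires both the projection
inequality and the monotonicity hypothesis on the norms, and it is
precisely where the assumption of a nonincreasing family of norms is used.
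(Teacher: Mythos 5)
Your proof is correct and follows essentially the same route as the paper's: expand the squared distance around the previous projection point $\pi_{(t)}^{\mathcal{C}}(R_{t-1})$, kill the cross term with Blackwell's condition, use the monotonicity of the norms to telescope, and finish with the cone identity $T\mathcal{C}=\mathcal{C}$. If anything, your second step (inserting $\pi_{(t-1)}^{\mathcal{C}}(R_{t-1})$ as a comparison point before switching from $\|\,\cdot\,\|_{(t)}$ to $\|\,\cdot\,\|_{(t-1)}$) is written more carefully than the paper's, which elides that intermediate comparison; the only detail you gloss over is the trivial $t=1$ case, where $R_0-\pi_{(1)}^{\mathcal{C}}(R_0)=0$ makes the cross term vanish without invoking the oracle.
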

\begin{proof}
  Denote $R_0=0$ and $R_t=\sum_{s=1}^{t}r_s$ ($t\geqslant 1$).
  For all $t\geqslant 1$,
\begin{align*}
\min_{r\in \mathcal{C}}\left\| R_t-r \right\|_{(t)}^2 &=\left\| R_t-\pi_{(t)}^{\mathcal{C}}(R_t) \right\|^2_{(t)}\leqslant \left\| R_t-\pi_{(t)}^{\mathcal{C}}(R_{t-1}) \right\|_{(t)}^2\\
  &=\left\| R_{t-1}+r_t-\pi_{(t)}^{\mathcal{C}}(R_{t-1}) \right\|_{(t)}^2 \\
  &=\left\| R_{t-1}-\pi_{(t)}^{\mathcal{C}}(R_{t-1}) \right\|_{(t)}^2+2\left< R_{t-1}-\pi_{(t)}^{\mathcal{C}}(R_{t-1}), r_t \right>_{(t)} + \left\| r_t \right\|_{(t)}^2.
\end{align*}
First note that for $t=1$, the first two terms of the above last
expression are zero because  $R_{t-1}=\pi_{(t)}^{\mathcal{C}}(R_{t-1})=0$.
For $t\geqslant 2$, we use the nonincreasingness of the family of norms to bound the first term of the above last expression as:
\begin{equation}
\label{eq:1}
\left\| R_{t-1}-\pi_{(t)}^{\mathcal{C}}(R_{t-1}) \right\|_{(t)}^2\leqslant \left\| R_{t-1}-\pi_{(t)}^{\mathcal{C}}(R_{t-1}) \right\|_{(t-1)}^2=\min_{r\in \mathcal{C}}\left\| R_{t-1}-r \right\|_{(t-1)}^2, 
\end{equation}
and the second term (the inner product) is nonpositive because actions
$(a_t)_{t\geqslant 2}$ are given by Blackwell's algorithm, and by denoting
$a$ the involved oracle we get
\[ \left< R_{t-1} -\pi_{(t)}^{\mathcal{C}}(R_{t-1}),r_t \right>_{(t)}=\left< R_{t-1}-\pi_{(t)}^{\mathcal{C}}(R_{t-1}), \rho_t(a\left[ t,\rho_t,R_{t-1} \right], b_t  ) \right>_{(t)},  \]
which is nonpositive by Blackwell's condition.

Therefore, it holds for $t\geqslant 2$ that
$\min_{r\in \mathcal{C}}\left\| R_{t}-r \right\|_{(t)}^2\leqslant \min_{r\in \mathcal{C}}\left\| R_{t-1}-r \right\|_{(t-1)}^2+\left\| r_t \right\|_{(t)}^2$
and $\min_{r\in \mathcal{C}}\left\| R_1-r \right\|_{(1)}^2\leqslant \left\| r_1 \right\|_{(1)}^2$.
Summing and taking the square root gives
\[ \min_{r\in \mathcal{C}}\left\| R_T-r \right\|_{(T)} \leqslant \sqrt{\sum_{t=1}^T\left\| r_t \right\|_{(t)}^2}. \]
We then conclude by remarking that $\mathcal{C}$ being a cone, it
holds that $T\mathcal{C}=\mathcal{C}$ and
\[ \min_{r\in \mathcal{C}}\left\| \frac{1}{T}\sum_{t=1}^Tr_t-r \right\|_{(T)} =\min_{r\in T\mathcal{C}}\frac{1}{T}\left\| R_T-r \right\|_{(T)} =\frac{1}{T}\min_{r\in \mathcal{C}}\left\| R_T-r \right\|_{(T)}. \]
\end{proof}
In the basic setting where the inner product is constant and the outcome vectors are bounded by a given quantity, the above guarantee recovers
the classical $1/\sqrt{T}$ convergence speed.

\begin{remark}
A similar analysis could be carried without the assumption that the norms \((\left\|\,\cdot\,\right\|_{(t)})_{t\geqslant 1}\) are nonincreasing, but inequality (\ref{eq:1}) would not hold and the derived upper bound would contain additional correction terms and would not be as neat as in Theorem~\ref{thm:approachability-guarantee}.
\end{remark}

We assume in the following corollary that the target set is the negative orthant
$\mathcal{C}=\mathbb{R}_-^d$ (which is indeed a closed convex cone)
and present a general choice of time-dependent inner products which yields
different convergence rates for each component of the average
outcome vector. This construction and analysis will be applied
to absorbing games in Sections~\ref{sec:absorbing-games} to~\ref{sec:result-strat-play}.

\begin{corollary}
  \label{cor:1}
Let $(\mu_t^{(1)})_{t\geqslant 1},\dots,(\mu_t^{(d)})_{t\geqslant 1}$
be $d$ positive and nonincreasing sequences, and consider the following
inner products and associated norms:
\[ \left< u, v \right>_{(t)}:=\sum_{i=1}^d(\mu_t^{(i)})^2u^{(i)}v^{(i)},\qquad \left\| u \right\|_{(t)}:=\left< u, u \right>_{(t)}^{1/2}, \qquad u,v\in \mathbb{R}^d,\quad t\geqslant 1. \]
If $\mathcal{C}=\mathbb{R}_-^d$ satisfies Blackwell's condition with respect to
$\mathcal{R}$ and
$(\left< \,\cdot\,, \,\cdot\, \right>_{(t)})_{t\geqslant 1}$, then Blackwell's algorithm
associated with a corresponding oracle guarantees for
each component $1\leqslant i\leqslant d$, and all $T\geqslant 1$,
\[ \frac{1}{T}\sum_{t=1}^Tr_t^{(i)}\leqslant \frac{1}{T\mu_T^{(i)}}\sqrt{\sum_{t=1}^T\left\| r_t \right\|_{(t)}^2}. \]
\end{corollary}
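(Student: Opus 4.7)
The plan is to simply reduce the corollary to Theorem~\ref{thm:approachability-guarantee} by exploiting the separable structure of the weighted dot product together with the coordinate-wise nature of the orthant $\mathbb{R}_-^d$.

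First I would check the hypothesis of Theorem~\ref{thm:approachability-guarantee}: since $\|u\|_{(t)}^2=\sum_{i=1}^d(\mu_t^{(i)})^2(u^{(i)})^2$ and each sequence $(\mu_t^{(i)})_{t\geqslant 1}$ is positive and nonincreasing, the squared norm (and hence the norm) is nonincreasing in $t$ for every $u\in \mathbb{R}^d$. The Blackwell condition is assumed, so the theorem applies directly and yields, for $R_T=\sum_{t=1}^T r_t$,
\[
\min_{r\in \mathcal{C}}\left\|\tfrac{1}{T}R_T-r\right\|_{(T)}\leqslant \frac{1}{T}\sqrt{\sum_{t=1}^T\|r_t\|_{(t)}^2}.
\]

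Next I would compute the left-hand side explicitly when $\mathcal{C}=\mathbb{R}_-^d$. Because $\|\cdot\|_{(T)}^2$ is a weighted sum of squared coordinate differences, the projection $\pi_{(T)}^{\mathcal{C}}$ acts coordinate by coordinate: for any $u\in \mathbb{R}^d$, its $i$-th coordinate is $\min(u^{(i)},0)$, so $(u-\pi_{(T)}^{\mathcal{C}}(u))^{(i)}=\max(u^{(i)},0)=u_+^{(i)}$. Therefore
\[
\min_{r\in \mathcal{C}}\|u-r\|_{(T)}^2 \;=\; \sum_{i=1}^d (\mu_T^{(i)})^2 (u_+^{(i)})^2.
\]
In particular, dropping all coordinates except $i$, for every $1\leqslant i\leqslant d$,
\[
\mu_T^{(i)}\, u_+^{(i)}\leqslant \min_{r\in \mathcal{C}}\|u-r\|_{(T)}.
\]

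Finally, applying this inequality to $u=\tfrac{1}{T}R_T$ and combining with the bound from Theorem~\ref{thm:approachability-guarantee} gives
\[
\mu_T^{(i)}\left(\frac{1}{T}\sum_{t=1}^T r_t^{(i)}\right)_+ \leqslant \frac{1}{T}\sqrt{\sum_{t=1}^T\|r_t\|_{(t)}^2},
\]
and since $\mu_T^{(i)}>0$, dividing and dropping the positive part (which can only weaken the left-hand side) yields the announced inequality. There is essentially no obstacle here beyond the verification that projecting onto $\mathbb{R}_-^d$ in a diagonally weighted Euclidean metric remains coordinate-wise; the gain over Theorem~\ref{thm:approachability-guarantee} lies entirely in choosing the weights $\mu_T^{(i)}$ to differentiate convergence speeds across coordinates.
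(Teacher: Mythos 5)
Your proposal is correct and follows essentially the same route as the paper: verify that the weighted norms are nonincreasing, invoke Theorem~\ref{thm:approachability-guarantee}, and then lower-bound the distance to $\mathbb{R}_-^d$ coordinate by coordinate (you do this by computing the separable projection explicitly, the paper by simply dropping all but one term from the sum of squares, which amounts to the same estimate). No gaps.
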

\begin{proof}
Because the sequences
$(\mu_t^{(1)})_{t\geqslant 1},\dots,(\mu_t^{(d)})_{t\geqslant 1}$ are
nonincreasing, so are the associated norms
$(\left\| \,\cdot\, \right\|_{(t)})_{t\geqslant 1}$ 
and Theorem~\ref{thm:approachability-guarantee} applies and gives
\[ \min_{r\in \mathbb{R}_-^d}\left\| \frac{1}{T}\sum_{t=1}^Tr_t - r\right\|_{(T)}\leqslant \frac{1}{T}\sqrt{\sum_{t=1}^T\left\| r_t \right\|_{(t)}^2}. \]
Besides, for each component $1\leqslant i_0\leqslant d$, we can write
\begin{align*}
\min_{r\in \mathbb{R}_-^d}\left\| \frac{1}{T}\sum_{t=1}^Tr_t - r\right\|_{(T)}&=\min_{r^{(1)},\dots,r^{(d)}\leqslant 0}\sqrt{\sum_{i=1}^d(\mu_T^{(i)})^2\left( \frac{1}{T}\sum_{t=1}^T r_t^{(i)}-r^{(i)} \right)^2 }\\
  &\geqslant \min_{r^{(i_0)}\leqslant 0}\mu_T^{(i_0)}\left| \frac{1}{T}\sum_{t=1}^T r_t^{(i_0)}-r^{(i_0)}   \right|  \\
  &\geqslant \frac{\mu_T^{(i_0)}}{T}\sum_{t=1}^Tr_t^{(i_0)}.
\end{align*}
The result follows.
\end{proof}

\section{Absorbing Games}
\label{sec:absorbing-games}

We now apply the above approachability tools to 
 \textit{two-player zero-sum absorbing games}, thereby giving a novel illustration of the relevance of online learning tools for solving games.
We give an alternative proof of a classical result~\citep{kohlberg74,MNR09}: the existence, for each player and each $\varepsilon>0$, of a strategy that is $\varepsilon$-optimal irrespective of the duration of the game, provided that the duration is long enough. 
We first recall in this section the definition of absorbing games and the result we aim at proving, and Sections~\ref{sec:bal} to~\ref{sec:result-strat-play} are dedicated to the proof. Section~\ref{sec:comp} compares our approach to the related literature. 

For a compact or measurable space $A$, we denote by $\Delta(A)$ the set of probability distributions over $A$. If $A$ is compact, $\Delta(A)$ equipped with the weak topology is also compact. 

\subsection{Definition}
A two-player zero-sum absorbing game is described by a tuple $(\Omega, I, J, q, g,g^*)$, where:
\begin{itemize}
    \item  $\Omega=\left\{\Omega^*\right\} \cup \left\{\omega\right\}$ is the \textit{state space}, which is the union of a finite set $\Omega^*$ of \textit{absorbing states}, and $\left\{\omega  \right\} $ containing a unique \textit{non-absorbing state};
    \item $I$ and \(J\) are the \emph{pure action sets} of Player I and Player II, respectively; they are assumed to be compact topological sets; the elements of \(\Delta(I)\) and \(\Delta(J)\) are called \emph{mixed actions};
    \item $q:I \times J \to \Delta(\Omega)$ is the \textit{state transition function}, and is assumed to be separately continuous on $I \times J$; for \(i\in I\), \(j\in J\) and \(\omega\in \Omega\), denote \(q(\omega|i,j)\) the component (probability weight) of \(q(i,j)\) corresponding to \(\omega\);
    \item $g  :I \times J \to \mathbb{R}$ is the \textit{non-absorbing payoff function}, and is assumed to be separately continuous on $I\times J$ and bounded;
    \item $g^*: \Omega \rightarrow \m{R}$ is the \textit{absorbing payoff function}. 
\end{itemize}
Starting from the initial state $\omega_1=\omega$, at each stage $t \geqslant 1$, the game proceeds as follows.
\begin{itemize}
    \item If the current state $\omega_t$ is $\omega$, then simultaneously, Player I (resp.\ Player II) chooses some action $i_t \in I$ (resp.\ $j_t \in J$) possibly drawn according to a mixed action \(x_t\in \Delta(I)\) (resp.\ \(y_t\in \Delta(J)\)). Player I (resp.\ Player II) gets a stage payoff $g( i_t, j_t)$ (resp.\ \(-g( i_t, j_t)\)). A new state $\omega_{t+1}$ is drawn according to distribution $q(i_t,j_t)$, and players observe $(i_t,j_t,\omega_{t+1})$. 
    \item If the current state \(\omega_t\) belongs to $\Omega^*$, then there is no strategic interaction, and Player I gets a stage payoff $g^*(\omega_t)$. The next state is again $\omega_{t+1}=\omega_t$.
\end{itemize}
We denote the stage payoff as
\begin{equation}
\label{eq:stage-payoff}
g_t:=\begin{cases}
    g(i_t,j_t)&\text{if \(\omega_t=\omega\)}\\
    g^*(\omega_t)&\text{if \(\omega_t \in \Omega^*\)}.
  \end{cases} 
\end{equation}
\begin{remark}
Once an absorbing state is reached, the game is essentially over, at least from a strategic point of view. For this reason, we only consider $\omega$ as possible initial state.
 \end{remark}
A sequence $(\omega_1,i_1,j_1,...,\omega_t,i_t,j_t,\dots )\in (\Omega\times I\times J)^{\mathbb{N}^*}$ is called \textit{a play}. 
\begin{itemize}
\item
A \textit{behavior strategy} for a player specifies a mixed action for each possible set of past observations. Formally, for Player I, it is defined as a collection of measurable maps $\sigma=(\sigma_t)_{t \geq 1}$, where $\sigma_t:(I \times J)^{t-1} \rightarrow \Delta(I)$. 
 A behavior strategy $\tau=(\tau_t)_{t \geq 1}$ for Player II is defined similarly.
\item A \textit{Markov strategy}, in the context of absorbing games, is a strategy that plays as a function of the current stage. It can be identified with a sequence of elements in $\Delta(I)$ (resp.\ \(\Delta(J)\)) for Player I (resp.\ Player II). \emph{Pure Markov strategies} are identified with sequences in \(I\) and \(J\).
\item A \textit{stationary strategy}, in the context of absorbing games, is a strategy that plays independently of the past history. It can be identified with an element of $\Delta(I)$ for Player I (resp.\ \(\Delta(J)\) for Player II). 
\end{itemize}
The sets of behavior strategies for Player I and II are denoted by $\Sigma$ and $\mathcal{T}$, respectively. 
A pair of strategies $(\sigma,\tau) \in \Sigma \times \mathcal{T}$ naturally defines, using Kolmogorov's extension theorem, a probability distribution $\mathbb{P}_{\sigma,\tau}$ over the set of plays $(\Omega \times I \times J)^{\mathbb{N}_*}$, 
satisfying, for \((\omega_1,i_1,j_1,\dots,\omega_t,i_t,j_t,\dots )\sim \mathbb{P}_{\sigma,\tau}\), \(\omega_1=\omega\) almost surely, and then for all \(t\geqslant 1\),
\[ h_{t-1}:=(i_1,j_1,\dots,i_{t-1},j_{t-1})\qquad x_t:=\sigma_t(h_{t-1})\qquad y_t:=\sigma_t(h_{t-1}) \]
\[ i_t\mid h_{t-1}\sim x_t\qquad j_t\mid h_{t-1}\sim y_t\qquad i_t\perp j_t\mid h_{t-1}, \]
\[ \omega_{t+1}|i_t,j_t\sim 
\begin{cases}
  \delta_{\omega_t}&\text{if $\omega_t\in \Omega^*$}\\
  q(i_t,j_t)&\text{otherwise}.
\end{cases}
\]
We denote by $\E_{\si,\tau}\left[ \,\cdot\, \right]$ the expectation with respect to \(\mathbb{P}_{\sigma,\tau}\).
 
Let $T \geq 1$ and $\la\in (0,1)$. The \textit{$T$-stage game} (resp.\ $\lambda$-\textit{discounted game}), denoted by $\Gamma_T$ (resp.\ \(\Gamma_{\lambda}\)), is the game with strategy sets $\Sigma$ and $\mathcal{T}$, and payoff function
 \[ 
\ga_T(\sigma,\tau):=\E_{\sigma,\tau} \left[\frac{1}{T} \sum_{t=1}^T g_t \right],\qquad \sigma\in \Sigma,\ \tau\in \mathcal{T}, \]
respectively
\[  \ga_\la(\sigma,\tau):=\E_{\sigma,\tau}\left[\sum_{t \geq 1}\la(1-\la)^{t-1} g_t \right],\qquad \sigma\in \Sigma,\ \tau\in \mathcal{T},\]
where \(g_t\) is defined as in (\ref{eq:stage-payoff}).
\subsection{Values} The games \(\Gamma_{\lambda}\) and \(\Gamma_T\) are known to have values~\citep{MP70}, which are
 \begin{equation*}
v_{T}:=\max_{\sigma \in \Sigma} \min_{\tau \in \mathcal{T}} \gamma_{T}(\sigma,\tau)=\min_{\tau \in \mathcal{T}} \max_{\sigma \in \Sigma} \gamma_{T}(\sigma,\tau)\,.
\end{equation*}
\begin{equation}\label{valSG}
v_{\lambda}:=\max_{\sigma \in \Sigma} \min_{\tau \in \mathcal{T}} \gamma_{\lambda}(\sigma,\tau)=\min_{\tau \in \mathcal{T}} \max_{\sigma \in \Sigma} \gamma_{\lambda}(\sigma,\tau)\,.
\end{equation}

The value $v_\lambda$ can be interpreted as the \textit{payoff solution} of the game $\Gamma_\lambda$: when players play rationally, Player I (resp.\ Player II) should get at least $v_\lambda$ (at most resp.\ $-v_\lambda$). A strategy is \textit{optimal} for Player I (resp.\ for Player II) in $\Gamma_\lambda$ if it reaches the left-hand-side maximum (resp.\ the right-hand side minimum) in \eqref{valSG}. Similar interpretations and definitions hold for $\Gamma_T$. 

In what follows, we consider the linear extension of the non-absorbing payoff function $g$ to $\Delta(I) \times J$, meaning
\[ g(x,j):=\int_I^{}g(i,j)\,\mathrm{d}x(i),\qquad x\in \Delta(I),\ j\in J. \]
Similarly, the transition function $q$ is linearly extended to $\Delta(I) \times J$. 

Optimal strategies can be \textit{a priori} quite sophisticated. 
It turns out that in the discounted game, there exists ``simple'' optimal strategies ~\citep{MP70}, as shown by the following proposition. 
\begin{proposition} \label{prop:shapley}
Let \(\lambda\in (0,1)\).
Each player has an optimal stationary strategy in \(\Gamma_\lambda\). Moreover, if $x_{\lambda} \in \Delta(I)$ is an optimal stationary strategy for Player I, then for all $j \in J$,
\begin{equation*}
v_\lambda \leq \lambda g(x_{\lambda},j) +(1-\lambda)\left( \sum_{\omega^* \in \Omega^*} q(\omega^*|x_{\lambda},j) g^*(\omega^*)
+q(\omega|x_{\lambda},j)v_\lambda \right).
\end{equation*}
\end{proposition}
The above inequality stems from the fact that $v_\lambda$ satisfies a functional equation called the \textit{Shapley equation} (we refer the reader to \citet{shapley1953stochastic} when the state space and the action sets are finite, and \citet{MP70} for the general case). 

Let us intuitively explain the above inequality. Assume that in state $\omega$ in the discounted game $\Gamma_\lambda$, Player I plays the stationary strategy $x_{\lambda}$ at every stage. Assume moreover that Player II plays pure action $j$ at the first stage, and then plays optimally in $\Gamma_\lambda$ from stage 2. At stage 1, the expectation of the stage payoff is $g(x_{\lambda},j)$, and the next state is distributed according to $q(x_{\lambda},j)$. If the next state is $\omega^* \in \Omega^*$, then the total payoff from the next stage is $\sum_{t \geq 2} \lambda (1-\lambda)^{t-1} g^*(\omega^*)=(1-\lambda)g^*(\omega^*)$. If the next state is $\omega$, since both players play optimally from stage 2, the payoff from stage 2 is equal to $(1-\lambda) v_\lambda$. Hence, the right-hand side of the inequality is exactly what players get in $\Gamma_\lambda$. Since $x_{\lambda}$ is optimal, this quantity should be higher than $v_\lambda$, and this yields the inequality. 

\begin{definition} \label{def:uniform}
Let \(w\in \mathbb{R}\). 
\begin{enumerate}[(i)]
\item Player I can \textit{uniformly guarantee} $w$ if for all $\varepsilon>0$, there exists $\sigma \in \Sigma$ and $T_0 \geq 1$ such that for all $T \geq T_0$ and $\tau \in \mathcal{T}$, $\gamma_T(\sigma,\tau) \geq w-\varepsilon$.
\item Similarly, Player II can \textit{uniformly guarantee} $w$ if for each $\varepsilon>0$, there exists $\tau \in \mathcal{T}$ and $T_0 \geq 1$ such that for all $T \geq T_0$ and $\sigma \in \Sigma$, $\gamma_T(\sigma,\tau) \leq w+\varepsilon$.
\end{enumerate}
\end{definition}
\citet{MNR09} have proved the following result.
\begin{theorem} \label{theo:main}
There exists a unique $v \in \m{R}$ such that both players can uniformly guarantee $v$. Moreover,
\[ v=\lim_{T\to + \infty}v_T=\lim_{\lambda\to 0^+}v_{\lambda}. \]
\end{theorem}
This generalizes a result of \citet{kohlberg74}, that treated the case of finite action sets. Another proof of Theorem~\ref{theo:main} was also given in \citet{HIR21}.
\begin{remark} \label{rem:conv}
The uniqueness of $v$ follows from Definition~\ref{def:uniform}, and $v$ is called the \textit{uniform value}. The fact that the existence of $v$ implies the convergence of $v_T$ and $v_\lambda$ to $v$ is rather straightforward, and can be found for instance in \cite[Chapter 4]{sorin02b}. This limit \(v\) is called the \emph{limit value}. 
\end{remark}
Our goal is to give a proof of Theorem~\ref{theo:main} based on Blackwell's approachability. We will prove that Player I can uniformly guarantee $\limsup _{\lambda \rightarrow 0^+} v_\lambda$. By switching players' roles, we then immediately deduce that Player II can uniformly guarantee $\liminf_{\lambda \rightarrow 0^+} v_\lambda$, hence can uniformly guarantee $\limsup _{\lambda \rightarrow 0^+} v_\lambda$. Together with Remark~\ref{rem:conv}, this proves the theorem. The reader can find a comparison between this new proof and the proofs from \citet{kohlberg74,MNR09,HIR21} in Section~\ref{sec:comp}. 

Let \(\varepsilon\in (0,1/2)\) be fixed. The remaining of the paper is devoted to constructing a strategy for Player I that guarantees \(\limsup_{\lambda \rightarrow 0^+}v_{\lambda}-O(\varepsilon)\) in \(\Gamma_T\) for large enough \(T\).
Up to translating and multiplying the payoffs by a scalar, we can assume the following without loss of generality.
\begin{assumption}
    \label{assumption}
\[ \limsup\limits_{\lambda \to 0^+} \, v_\lambda = \varepsilon,\qquad \left\| g \right\|_{\infty}\leqslant 1\quad \text{and}\quad \left\| g^* \right\|_{\infty}\leqslant 1. \]
\end{assumption}
\section{Balanced Strategies and Lower Bounds on the Average Payoff} \label{sec:bal}
\label{sec:anaysis_absorbing}
For each $\lambda\in (0,1)$, let $x_\lambda\in \Delta(I)$ be an optimal stationary strategy for Player I. We consider a sequence $(\lambda_k)_{k\geqslant 1}$ such that
\begin{equation}
\label{eq:lambda-k}
v_{\lambda_k} \xrightarrow[k \rightarrow + \infty]{}\limsup_{\lambda\to 0^+} v_\lambda\quad \text{and}\quad \text{$x_{\lambda_k}$ converges to some $x_0 \in \Delta(I)$.} 
\end{equation}
Thanks to Assumption~\ref{assumption} and the fact that $g$ is separately continuous, there exists $\lambda \in (0,1)$ such that
the following holds.

\bigskip 

\begin{quote}
\emph{For the remaining of the paper, let \(\lambda\in (0,1)\) be such that     
for all $j \in J$,
	\begin{equation} \label{eq:lambda}
	|g(x_0,j) - g(x_{\lambda},j)| \leqslant \varepsilon/2\qquad \text{and}\qquad  v_\lambda \geqslant \varepsilon/2.
	\end{equation}
 }
\end{quote}

\bigskip

\begin{definition} \label{def:balanced}
    A strategy $(\sigma_t)_{t\geqslant 1}$ of Player I is said to be \textit{balanced} (with respect to \(x_0\) and \(x_{\lambda}\)) if for any $t \geq 1$ and history $h_{t-1}=(i_1,j_1,\dots,i_{t-1},j_{t-1})\in (I\times J)^{t-1}$,
\begin{enumerate}[(i)] 
\item $\sigma_t(h_{t-1})$ only depends on $(j_1,\dots,j_{t-1})$; \label{def:bal1}
\item $\sigma_t(h_{t-1})$ is a convex combination of $x_0$ and $x_\lambda$.
\end{enumerate}
\end{definition}
A first important point is that, in order to prove that a balanced strategy guarantees some quantity, it is sufficient to verify that it guarantees that quantity against all pure Markov strategies of Player II. The proof is quite standard and is given in Section~\ref{sec:pure-markov} for completeness. 
\begin{lemma}
\label{lm:pure-markov}
Let $\sigma\in \Sigma$ be a strategy that satisfies (\ref{def:bal1}) in Definition \ref{def:balanced}. 
Let $w \in \m{R}$ and $T \geq 1$. 
Assume that for any pure Markov strategy $\tau=(j_t)_{t \geq 1}$ of Player II, 
$\gamma_T(\sigma,\tau) \geq w$. Then for any $\tau \in \mathcal{T}$, $\gamma_T(\sigma,\tau) \geq w$. 
\end{lemma}
\bigskip

\begin{quote}
  \emph{For the remaining of the paper,
\begin{itemize}
\item let \(\sigma\) be a balanced strategy for Player I with respect to \(x_0\) and \(x_{\lambda}\),
\item \(\tau=(j_t)_{t\geqslant 1}\) be a pure Markov strategy for Player II,
\item and for each \(t\geqslant 1\), let \(a_t\in \left[ 0,1 \right] \) be the weight on mixed action \(x_\lambda\) when \(\sigma\) is played against \(\tau\), in other words:
\begin{equation}
\label{eq:a-t}
\sigma_t(j_1,\dots,j_{t-1})=a_tx_{\lambda}+(1-a_t)x_0.  
\end{equation}
\end{itemize}  }
\end{quote}

\bigskip 


We now introduce three quantities that play a crucial role in our construction. For all $j \in J$, let
\[
g^\sharp(j) := \sum_{\omega^* \in \Omega^*} g^*(\omega^*)\, q(\omega^*| x_\lambda, j) \quad \text{and} \quad g^\flat(j) := g(x_0, j).
\]
The quantity $g^\sharp(j)$ corresponds to the future expected payoff (induced by $x_\lambda$ and $j$) in the case where the next state is absorbed, whereas the quantity 
 $g^\flat(j)$ is the present payoff in \(\omega\), which is close to the payoff given by $x_\lambda$ and $j$, thanks to \eqref{eq:lambda}. 

The case where \(g^\flat(j)=g(x_0,j)\geqslant 0\) for all \(j\in J\) is easy, because the strategy for Player I that plays mixed action \(x_0\) at each stage obviously guarantees \(0\) in \(\Gamma_T\) for all \(T\geqslant 1\), which reaches our goal. Therefore, we will consider only the following case.
\begin{assumption}
    \label{ass:not-easy}
    There exists \(j\in J\) such that \(g^\flat (j)<0\).
\end{assumption}


The following lemma gives an expression of the probability that the game is in state $\omega$ at stage \(t\). The proof in given in Appendix~\ref{sec:alpha}.
\begin{lemma}
    \label{lm:alpha}
Let
\begin{equation}
\label{eq:alpha}
\alpha_t := \prod_{s=1}^{t-1} \big( a_s q(\omega|x_\lambda, j_s) + (1-a_s)q(\omega|x_0,j_s) \big), \qquad t\geqslant 1.
\end{equation}
Then, \(\mathbb{P}_{\sigma,\tau}\left[ \omega_t=\omega \right]=\alpha_t\).
\end{lemma}
We are now ready to state a key lower bound on the payoff \(\gamma_T(\sigma,\tau)\) that only involves the quantities $g^\sharp(j)$, $g^\flat(j)$, $\alpha_s$ and $a_s$. 
\begin{proposition} \label{prop:payoff_bound}
For all \(T\geqslant 1\),
\begin{equation}\label{eq:lower_bound_a_b}
\gamma_T(\sigma, \tau) \geqslant \frac{1}{T} \sum_{t=1}^T \alpha_t g^\flat(j_t) + \sum_{t=1}^T \sum_{s=1}^{t-1}  \alpha_s a_s g^\sharp(j_s)- \varepsilon.
\end{equation}
\end{proposition}

We first prove the following lemma and then establish Proposition~\ref{prop:payoff_bound}. 
\begin{lemma}\label{lm:x_0}
Let $j \in J$. The following statements hold.
\begin{enumerate}[(i)]
\item \label{sign}
	\[
    \lambda g^\flat(j) + (1-\lambda) g^\sharp(j) \geqslant 0.
    \] 
In particular, either $g^\sharp(j) \geqslant 0$ or $g^\flat(j) \geqslant 0$.
\item \label{positive_value}
    \[
    \sum_{\omega^* \in \Omega^*}  g^*(\omega^*)\,q(\omega^* |\,x_0, j)\geqslant 0.
    \]
 \end{enumerate}
\end{lemma}
\begin{proof}
\begin{enumerate}[(i)]
\item
Using \eqref{eq:lambda} and Proposition~\ref{prop:shapley}, we get
	\begin{align*}
	\lambda g^\flat(j) + (1-\lambda) g^\sharp(j)&=\lambda\, g(x_0, j) + (1-\lambda) \sum_{\omega^* \in \Omega^*} g^*(\omega^*) q(\omega^*| x_{\lambda}, j)   \\
&\geqslant \lambda\, g(x_\lambda, j) + (1-\lambda) \sum_{\omega^* \in \Omega^*} g^*(\omega^*)\, q(\omega^*| x_{\lambda}, j) -\lambda\varepsilon/2  \\ 
&\geqslant v_\lambda \big(1 -  (1-\lambda)q(\omega| x_\lambda, j)\big) -\lambda\varepsilon/2  \\ 
&\geqslant \frac{\varepsilon}{2} \big(1 -  (1-\lambda)\big) -\lambda\varepsilon/2 \\
&\geqslant 0.
	\end{align*}
\item
Using Assumption~\ref{assumption}, Proposition~\ref{prop:shapley}, the continuity of the transition kernel $q$ with respect to $i\in I$ (thus continuity with respect to the weak topology on the space of mixed actions), and a sequence \((\lambda_k)_{k\geqslant 1}\) satisfying (\ref{eq:lambda-k}),
	\begin{align*}
		\sum_{\omega^* \in \Omega^*}  g^*(\omega^*) q(\omega^*| x_0, j)  
		&= \lim_{k \rightarrow +\infty} \left( \lambda_k g(x_{\lambda_k}, j) + (1-\lambda_k) \sum_{\omega^* \in \Omega^*} g^*(\omega^*) q(\omega^*| x_{\lambda_k}, j) \right) {\color{red} } \\
		&\geqslant  \lim_{k \rightarrow+\infty} v_{\lambda_k}(1 -  (1-\lambda_k)q(\omega| x_{\lambda_k}, j)) \geqslant 0.
	\end{align*}
\end{enumerate}
\end{proof}

\begin{proof}[Proof of Proposition~\ref{prop:payoff_bound}]
We have
\begin{align*}
\gamma_T(\sigma, \tau) &= 
  \frac{1}{T}\expec{\sigma,\tau} {\sum_{t=1}^T \mathbbm{1}_{\{\omega_t = \omega\}} g(i_t,j_t)}
  +  \frac{1}{T}\expec{\sigma,\tau} {\sum_{t=1}^T \sum_{\omega^* \in \Omega^*} \mathbbm{1}_{\{\omega_t = \omega^*\}} g^*(\omega^*)} \\
=& 
 \frac{1}{T}\expec{\sigma,\tau}{ \sum_{t=1}^T \mathbbm{1}_{\{\omega_t = \omega\}} g(i_t,j_t) + \sum_{t=1}^T \sum_{s=1}^{t-1} \mathbbm{1}_{\{\omega_s = \omega\}} \sum_{\omega^* \in \Omega^*} g^*(\omega^*) q(\omega^*\,|\,i_s, j_s)}.
\end{align*}
Let us bound from below the term inside the first sum. We have
\begin{align*}
\expec{\sigma,\tau}{(\mathbbm{1}_{\{\omega_t=\omega\}} g(i_t,j_t)}&=  \m{P}_{\sigma,\tau}\left[  \omega_t=\omega \right]
\left(a_t g(x_\lambda,j_t) + (1-a_t)g(x_0,j_t)  \right)\\
 &\geqslant \alpha_t\left( a_t(g(x_0,j_t)-\varepsilon/2)+(1-a_t)g(x_0,j_t) \right)\\  
&\geq \alpha_t g^\flat(j_t)-\varepsilon,
\end{align*}
where the first equality stems from the independence of $\omega_t$ and $i_t$, and the second inequality comes from \eqref{eq:lambda}. 
Using the same independence and Property~\eqref{positive_value} from Lemma~\ref{lm:x_0}, we get
\begin{multline*}
\mathbb{E}_{\sigma,\tau}\left[ \mathbbm{1}_{\{\omega_s = \omega\}}\sum_{\omega^* \in \Omega^*} g^*(\omega^*) q(\omega^*|i_s,j_s) \right]\\=
\alpha_s \sum_{\omega^* \in \Omega^*} g^*(\omega^*) \left( a_s q(\omega^*|x_\lambda,j_s)+(1-a_s)q(\omega^*|x_0,j_s) \right) 
\\\geq \alpha_s a_s g^\sharp(j_s).\qquad \qquad \qquad \qquad \qquad \qquad \qquad \qquad \qquad \qquad \qquad \quad 
\end{multline*}
The result follows from plugging the two previous inequalities into the first equality.
\end{proof}
Now that the proof of Proposition~\ref{prop:payoff_bound} is complete, let us explain at a broad level how we can relate the lower bound to the approachability tools that we developed in the first part of the paper. The lower bound \eqref{eq:lower_bound_a_b} involves two sums. A first idea would be to consider a two-dimensional vector payoff corresponding to the two terms involved in the sums, namely $(-\alpha_s g^\flat(j_s), -\alpha_s a_s g^\sharp(j_s))$, and show that Player I can approach the set $\m{R}^2_{-}$ for this vector payoff function. This would hopefully prove that both sums are larger than some $O(\varepsilon)$, when $T$ is large enough. This simple idea does not work, for two reasons. First, the component $-\alpha_s g^\flat(j_s)$ does not depend on $a_s$, hence there is no hope that Player I can force this component to be negative, even on average. Moreover, the second sum is a \textit{double sum}. As a consequence, a single quantity $a_s$ can have a contribution to the average payoff that does not vanish as $T\to + \infty$. Hence, we in fact need all $a_s$ to be very small, which motivates the introduction of a scaling factor that multiplies the $a_s$. These considerations lead to the following modified lower bound. 

\begin{proposition}
\label{prop:payoff_bound2}
Let $(\mu_t)_{t\geqslant 1}$ be a positive and nonincreasing sequence
and \((a_t)_{t\geqslant 1}\) defined as in (\ref{eq:a-t}). If
\(a_t\in \left[ 0,\mu_t \right]\) for all \(t\geqslant 1\), 
 then for all $T \geqslant 4/(\lambda \varepsilon^2 \mu_T)$,
    \begin{equation} \label{eq:lb2}
    \gamma_T(\sigma,\tau) \geqslant 
\frac{1}{T} \left(\sum_{t=1}^T \alpha_t\left(1 - \mu_t^{-1}a_t\right) g^\flat(j_t) + \sum_{t=1}^T \sum_{s=1}^{t-1} \alpha_s a_s g^\sharp(j_s) \right)\\ - 2 \varepsilon.
 \end{equation}
\end{proposition}
\begin{proof}
From Proposition~\ref{prop:payoff_bound}, we have
\begin{align*}
\gamma_T(\sigma, \tau) &\geqslant \frac{1}{T} \left(\sum_{t=1}^T \alpha_t g^\flat(j_t) + \sum_{t=1}^T \sum_{s=1}^{t-1}  \alpha_s a_s g^\sharp(j_s) \right) - \varepsilon
\\
&=
\frac{1}{T} \left(\sum_{t=1}^T \alpha_t\left(1 - \mu_t^{-1}a_t\right) g^\flat(j_t) + \sum_{t=1}^T \sum_{s=1}^{t-1} \alpha_s a_s g^\sharp(j_s) \right)
\\
  &\qquad \qquad \qquad +\frac{1}{T}\sum_{t=1}^T \mu^{-1}_t \alpha_t a_t\, g^\flat(j_t)-\varepsilon.
\end{align*}
To prove Proposition~\ref{prop:payoff_bound2}, it is enough to show that for $T \geqslant 4/(\lambda \varepsilon^2 \mu_T)$, the above right-most average is higher than $-\varepsilon$.

Denote $q^*(j) :=  1-q(\omega\,|\,x_{\lambda}, j)$, which is the probability that the game is absorbed when Player I plays $x_\lambda$ and Player II plays $j$, and 
%
$J(\varepsilon) := \{j\in J: q^*(j) \leqslant \lambda \varepsilon/2 \}$. 
Let $j \in J(\varepsilon)$. Using Proposition~\ref{prop:shapley} and the fact that \(g^*\leqslant 1\),
\begin{align*}
\lambda g(x_{\lambda}, j)
& \geqslant  v_{\lambda} \big(1 -  (1-{\lambda})q(\omega| x_{\lambda}, j)\big) - (1-\lambda) \sum_{\omega^* \in \Omega^*} g^*(\omega^*) q(\omega^*| x_\lambda, j) \\
& \geqslant  v_{\lambda} \big(1 -  (1-{\lambda})(1-q^*(j))\big) - (1-\lambda) \sum_{\omega^* \in \Omega^*} q(\omega^*| x_\lambda, j) \\
&= v_{\lambda} \big(\lambda+(1-{\lambda})q^*(j)\big) - (1-\lambda)q^*(j) \\
&= \lambda v_{\lambda} + (1-\lambda) q^*(j) (v_{\lambda}-1) \\
&\geqslant \frac{\lambda\varepsilon}{2} - \frac{\lambda \varepsilon}{2} = 0,
\end{align*}
where the last inequality follows from \eqref{eq:lambda} and the fact that \(j\in J(\varepsilon)\). Hence $g(x_{\lambda}, j) \geqslant 0$, and we deduce that
\[
 \frac{1}{T} \sum_{t=1}^T \alpha_t \mu^{-1}_t a_t g(x_{\lambda}, j_t) \geqslant \frac{1}{T} \sum_{\substack{1\leqslant t\leqslant T\\j_t \notin J(\varepsilon)}}^{} \alpha_t \mu^{-1}_t a_t g(x_\lambda, j_t).
\]
Let $t\geqslant 1$ such that $j_t \in J \setminus J(\varepsilon)$. We have
\begin{align*}
\alpha_{t+1} 
&= \big( a_t q(\omega|x_\lambda,j_t) + (1-a_t) q(\omega|x_0,j_t) \big) \alpha_t \\
&\leqslant \big( a_t(1-q^*(j_t)) + (1-a_t)\big) \alpha_t  \leqslant \left( 1 - \frac{\lambda\varepsilon}{2}a_t \right) \alpha_t,
\end{align*}
from which we deduce that for all $t \geq 1$,
\begin{equation*}
\alpha_t \leq \prod_{\substack{1\leqslant s<t\\j_s \notin J(\varepsilon)}} \left( 1 - \frac{\lambda\varepsilon}{2}a_s \right).
\end{equation*}
Combining with the fact that $g \geq -1$ and $\mu_t$ is non-increasing, we obtain
\begin{align*}
\sum_{\substack{1\leqslant t \leqslant T\\j_t \notin J(\varepsilon)}} \alpha_t \mu^{-1}_t a_t g(x_\lambda, j_t) &\geqslant - \mu^{-1}_T\sum_{\substack{1\leqslant t\leqslant T\\j_t \notin J(\varepsilon)}} \alpha_t  a_t  \\
& \geqslant - \mu^{-1}_T \sum_{\substack{1\leqslant t\leqslant T\\j_t \notin J(\varepsilon)}} a_t \prod_{\substack{1\leqslant s<t\\j_s \notin J(\varepsilon)}} \left( 1 - \frac{\lambda\varepsilon}{2}a_s \right).
\end{align*}
Define $p_s:=0$ if $j_s \in J(\varepsilon)$, and $p_s:=\lambda\varepsilon a_s/2$ otherwise, so that 
\[ 
\sum_{\substack{1\leqslant t\leqslant T\\j_t \notin J(\varepsilon)}} a_t \prod_{\substack{1\leqslant s<t\\j_s \notin J(\varepsilon)}} \left( 1 - \frac{\lambda\varepsilon}{2}a_s \right)=\frac{2}{\lambda \varepsilon} \sum_{t=1}^\infty p_t \prod_{s=1}^{t-1} \left( 1 - p_s \right).
 \]
Let $X_1, X_2, \dots$ be a sequence of independent Bernoulli random variables with $\mathbb{P}[X_t=1] = p_t$. Then
\[
\sum_{t=1}^\infty p_t \prod_{s=1}^{t-1} \left( 1 - p_s \right) = \sum_{t=1}^{\infty} \mathbb{P}[X_1, \dots, X_{t-1}=0, X_t=1] = \mathbb{P}[\exists\,t,\, X_t=1] \le 1.\]
We deduce that
\[
\sum_{\substack{1\leqslant t\leqslant T\\j_t \notin J(\varepsilon)}} a_t \prod_{\substack{1\leqslant s<t\\j_s \notin J(\varepsilon)}} \left( 1 - \frac{\lambda\varepsilon}{2}a_s \right)  	 \leqslant \frac{2}{\lambda \varepsilon}.
\]
Therefore if $T \geqslant 4/(\lambda \varepsilon^2 \mu_T)$,
\begin{align*}
\frac{1}{T} \sum_{t=1}^T \alpha_t \mu^{-1}_t a_t g(x_{\lambda}, j_t) \geqslant
& - \frac{2}{\lambda \varepsilon\mu_T T} \geqslant -\frac{\varepsilon}{2}.
\end{align*}
Using \eqref{eq:lambda} and the fact that \(a_t\leqslant \mu_t\) by assumption,
we deduce that
\[
 \frac{1}{T} \sum_{t=1}^T \mu^{-1}_t \alpha_t a_t g(x_0, j_t)\geqslant - \varepsilon,
\]
which completes the proof.
\end{proof}
In the next section, we apply the approachability tools from the first section to a two-dimensional vector payoff, composed of the two terms that appear in each sum of the lower bound from Proposition~\ref{prop:payoff_bound2}. 

\section{An Auxiliary Approachability Problem}
\label{sec:an-auxil-appr}

We consider the sets of actions
$\mathcal{A}=[0,1]$, $\mathcal{B} = J$, and target set
$\mathcal{C}=\mathbb{R}_-^2$. For $\alpha\geqslant 0$ and $\mu >0$, denote
\[ \rho_{\alpha,\mu}:(a,j) \mapsto \begin{pmatrix}
 - \alpha a g^\sharp(j)\\
\alpha(\mu^{-1} a-1) g^\flat(j)
 \end{pmatrix},\]
and let $\mathcal{R}=\left\{ \rho_{\alpha,\mu} \right\}_{\substack{\alpha\geqslant 0 \\\mu\in (0,1]}}$ be the set of possible outcome functions. Let $(\mu_t)_{t\geqslant 1}$ be a positive and nonincreasing
sequence in $(0,1]$,
and define the following inner product on $\mathbb{R}^2$:
\[ \left< u, v \right>_{(t)} =u^{(1)}v^{(1)}+\mu_t^2 u^{(2)}v^{(2)},\qquad u,v\in \mathbb{R}^{2},\quad t\geqslant 1. \]
For \(R\in \mathbb{R}^2\), define
\[ \pi(R):=(\max_{}(0,R^{(1)}),\max_{}(0,R^{(2)})). \]
\begin{proposition}[Blackwell's condition]\label{prop:oracle}
Let $t\geqslant 1$, $\alpha \geqslant 0$, $\mu \in (0,1]$ and $R = (R^{(1)}, R^{(2)}) \in \mathbb{R}^2$.
Denote the components of \(\pi (R)\) as \((\tilde{R}^{(1)},\tilde{R}^{(2)})\).
\begin{enumerate}[(i)]
  \item\label{item:projection} \(\pi(R)=R-\argmin_{R'\in \mathbb{R}_-^2}\left\| R'-R \right\|_{(t)}\).
 \item\label{item:bounds-action} The following quantity
  \[ a [t,\rho_{\alpha,\mu},R]:=
    \begin{cases}
    0&\text{if $\pi(R)=0$}  \\
\displaystyle \sup_{\substack{j\in J\\
      g^\flat(j)<0}}\frac{\mu_t^2 g^\flat(j)\tilde{R}^{(2)} }{\mu^2_t \mu^{-1} g^\flat(j)\tilde{R}^{(2)} - g^\sharp(j)\tilde{R}^{(1)}} &\text{otherwise,}
    \end{cases}
     \] 
  is well-defined and satisfies $0\leqslant a [t,\rho_{\alpha,\mu},R]\leqslant \mu \leqslant 1$;
\item\label{item:oracle} For all $j \in J$ and $R \in \mathbb{R}^2$,
\[ \left< \rho_{\alpha,\mu}(a [t,\rho_{\alpha,\mu},R],j), \pi(R) \right>_{(t)}\leqslant 0. \]
\end{enumerate}
In other words, $\mathcal{C}=\mathbb{R}_-^2$ satisfies Blackwell's
  condition with respect to
  $\mathcal{R}$ and
  $(\left< \,\cdot\,, \,\cdot\, \right>_{(t)})_{t\geqslant 1}$, with mapping $(t,\rho,R)\mapsto a [t,\rho,R]$ being an associated oracle.
\end{proposition}
\begin{proof}
We will use repeatedly the fact that \(\tilde{R}^{(1)}\) and \(\tilde{R}^{(2)}\) are nonnegative by definition.
\begin{enumerate}[(i)]
\item $\pi(R)$ can be written as
\begin{align*}
\pi(R) &= R-\argmin_{R'\in \mathbb{R}_-^2}\left\| R'-R \right\|_{(t)}^2\\
  &=R-\argmin_{R'^{(1)},R'^{(2)}\leqslant 0}\left\{ (R'^{(1)}-R^{(1)})^2+\mu_t^2(R'^{(2)}-R^{(2)})^2 \right\}.
\end{align*}
Therefore, for each component $i\in \left\{ 1,2 \right\}$,
\[ \tilde{R}^{(i)}=R^{(i)} - \argmin_{R'^{(i)}\leqslant 0}(R'^{(i)}-R^{(i)})^2=R^{(i)}-\min_{}(0,R^{(i)})=\max_{}(R^{(i)},0)\geqslant 0. \]
\item $a [t,\rho_{\alpha,\mu},R]$ is either zero or defined as the supremum of a 
  fraction. In the latter case, meaning \(\pi(R)\neq 0\), first note that the supremum is taken on a nonempty set thanks to Assumption~\ref{ass:not-easy}. Then, for \(j\in J\) such that $g^\flat(j)<0$, we have $g^\sharp(j)>0$ by Property~\eqref{sign} in Lemma~\ref{lm:x_0}, and $\mu$ is positive from the statement. Therefore, both the numerator is nonpositive and the denominator is negative, and thus
  \[
 0\leqslant \frac{\mu_t^2 g^\flat(j) \tilde{R}^{(2)} }{\mu_t^2\mu^{-1}g^\flat(j)\tilde{R}^{(2)} - g^\sharp(j)\tilde{R}^{(1)} } \leqslant  \frac{\mu_t^2 g^\flat(j) \tilde{R}^{(2)} }{\mu_t^2\mu^{-1}g^\flat(j)\tilde{R}^{(2)}} = \mu.
  \]
\item Let \(j\in J\) be fixed and denote \(g^{\sharp}:=g^{\sharp}(j)\) and \(g^{\flat}:=g^{\flat}(j)\) for short. For every $a\in [0,\mu]$, the following inner product writes
\begin{align*}
\left< \rho_{\alpha,\mu}(a,j), \pi(R) \right>_{(t)} 
&=- \alpha \left(  a g^{\sharp} \tilde{R}^{(1)} + \mu_t^{2}( 1-\mu^{-1}a) g^{\flat} \tilde{R}^{(2)} \right) \\
&=- \alpha \left( a \left( g^{\sharp}\tilde{R}^{(1)} -  \mu^2_t\mu^{-1}g^{\flat}\tilde{R}^{(2)}\right) + \mu_t^2 g^{\flat}\tilde{R}^{(2)} \right).
\end{align*}
If $\pi(R)=0$, the result is true for any choice of $a$.

Otherwise, if $g^{\flat}=0$, it follows that $g^{\sharp}\geqslant 0$ by Property~\eqref{sign} in Lemma~\ref{lm:x_0}, and 
\[\left< \rho_{\alpha,\mu}(a,j), \pi(R) \right>_{(t)}  =- \alpha\, a\, g^{\sharp} \tilde{R}^{(1)} \le 0. \] 

If  $g^{\flat} > 0$ and $g^{\sharp} \geqslant 0$, since $a\leqslant \mu$, we have
\[
\left< \rho_{\alpha,\mu}(a,j), \pi(R) \right>_{(t)} = - \alpha \left( a g^{\sharp} \tilde{R}^{(1)} + \mu_t^{2}(1 -  \mu^{-1}a) g^{\flat} \tilde{R}^{(2)} \right)\leqslant 0,
\]
because all quantities in the above grand parentheses are nonnegative.

If $g^{\flat} < 0$, which implies that $g^{\sharp}>0$, we have \(g^{\sharp}\tilde{R}^{(1)}-\mu_t^2\mu^{-1}g^{\flat}\tilde{R}^{(2)}>0\) and therefore
\begin{align*}
&\left< \rho_{\alpha,\mu}(a[t,\rho_{\alpha,\mu},R],j), \pi(R) \right>_{(t)}\\
&	\qquad = - \alpha \left( a[t,\rho_{\alpha,\mu},R] \cdot \left( g^{\sharp}\tilde{R}^{(1)} - \mu^2_t\mu^{-1}g^{\flat}\tilde{R}^{(2)}\right) + \mu_t^2 g^{\flat}\tilde{R}^{(2)} \right)\\
	&\qquad =- \alpha \Biggl( \left( g^{\sharp}\tilde{R}^{(1)} - \mu^2_t\mu^{-1}g^{\flat}\tilde{R}^{(2)}\right)\sup_{\substack{j'\in J\\g^{\flat}(j')<0}}\frac{-\mu_t^2g^{\flat}(j')\tilde{R}^{(2)}}{g^{\sharp}(j')\tilde{R}^{(1)}-\mu_t^2\mu^{-1}g^{\flat}(j')\tilde{R}^{(2)}} \Biggr.\\
&\qquad \qquad \qquad \qquad \qquad \qquad \qquad\qquad \qquad \qquad \qquad  \Biggl.+ \mu_t^2g^{\flat}\tilde{R}^{(2)}  \Biggr) \\
 &\qquad \leqslant - \alpha\left( \left( g^{\sharp}\tilde{R}^{(1)} - \mu^2_t\mu^{-1}g^{\flat}\tilde{R}^{(2)}\right)\frac{-\mu_t^2g^{\flat}\tilde{R}^{(2)}}{g^{\sharp}\tilde{R}^{(1)}-\mu_t^2\mu^{-1}g^{\flat}\tilde{R}^{(2)}} + \mu_t^2g^{\flat}\tilde{R}^{(2)}\right)\\
 &\qquad \leqslant 0.
\end{align*}

We now turn to the last remaining case and assume $g^{\flat}>0$ and $g^{\sharp}\leqslant 0$.
Then, note that \(\mu^2_t\mu^{-1}g^{\flat}\tilde{R}^{(2)}-g^{\sharp}\tilde{R}^{(1)} \geqslant 0\).
Property~\eqref{item:oracle} is equivalent to
\[a[t,\rho_{\alpha,\mu},R] \left( \mu^2_t\mu^{-1}g^{\flat}\tilde{R}^{(2)}-g^{\sharp}\tilde{R}^{(1)}\right) - \mu_t^2 g^{\flat}\tilde{R}^{(2)}\leqslant 0. \]
If  \(\mu^2_t\mu^{-1}g^{\flat}\tilde{R}^{(2)}-g^{\sharp}\tilde{R}^{(1)} =0\) or \(\tilde{R}^{(2)}=0\), the property is easily satisfied (because \(\tilde{R}^{(2)}=0\) implies \(a\left[ t,\rho_{\alpha,\mu},R \right]=0 \)).
We now assume those two quantities to be positive. Then, the property is equivalent to
\[
    a\left[ t,\rho_{\alpha,\mu},R \right]=
\sup_{\substack{j'\in J\\g^{\flat}(j')<0}} \frac{\mu_t^2 g^{\flat}(j')\tilde{R}^{(2)} }{\mu^2_t \mu^{-1} g^{\flat}\tilde{R}^{(2)} - g^{\sharp}\tilde{R}^{(1)}} \leqslant \frac{\mu^2_t g^{\flat}\tilde{R}^{(2)}}{\mu^2_t \mu^{-1} g^{\flat}\tilde{R}^{(2)} - g^{\sharp}\tilde{R}^{(1)}},
\]
which, after simplification, is equivalent to
\[
\sup_{\substack{j'\in J\\g^{\flat}(j')<0}} \frac{g^{\sharp}(j')}{g^{\flat}(j')} \leqslant \frac{g^{\sharp}}{g^{\flat}},
\]
which we now aim at proving. Let \(j'\in J\) such that \(g^{\flat}(j')<0\).
Then Property~\eqref{sign} from Lemma~\ref{lm:x_0} applied to \(j\) and \(j'\) gives
\[ \lambda g^{\flat}+(1-\lambda)g^{\sharp}\geqslant 0\quad \text{and}\quad \lambda g^{\flat}(j')+(1-\lambda)g^{\sharp}(j')\geqslant 0. \]
Multiplying the first above inequality by \(-g^{\flat}(j')\geqslant 0\) and the second by \(g^{\flat}\geqslant 0\), summing, and simplifying gives
\[ \frac{g^{\sharp}(j')}{g^{\flat}(j')} \leqslant \frac{g^{\sharp}}{g^{\flat}}. \]
Hence the result.
\end{enumerate}
\end{proof}
\begin{remark}
If our goal were solely to prove Theorem~\ref{theo:main}, we would not require an explicit expression for $a[t,\rho_{\alpha,\mu},R]$. Indeed, later on, we will only rely on the fact that $a[t,\rho_{\alpha,\mu},R]$ lies within $[0,\mu]$ and satisfies \eqref{item:oracle}. Therefore, the mere existence of such an $a[t,\rho_{\alpha,\mu},R]$ suffices.
This existence is actually quite straightforward using Blackwell’s dual condition (see Proposition~\ref{prop:dual}), which would have spared us the tedious case distinctions made in the proof of Proposition~\ref{prop:oracle}. However, we chose to give a formula for $a[t,\rho_{\alpha,\mu},R]$ because it allows us to derive an explicit expression for the $O(\varepsilon)$-uniform optimal strategy that we construct subsequently.\end{remark}


\section{The Resulting Strategy for Player I in the Absorbing Game}
\label{sec:result-strat-play}
We now define a strategy $\sigma=(\sigma_t)_{t\geqslant 1}$ for Player I in the absorbing game. 
For \((i_1,j_1,i_2,j_2,\dots )\in (I\times J)^{\mathbb{N}^*}\) and all \(t\geqslant 1\), recursively define
\begin{align*}
\tilde{R}_{t-1}^{(1)}&=\max_{}\left( 0,\ - \sum_{s=1}^{t-1}\alpha_sa_sg^{\sharp}(j_s) \right)\\
  \tilde{R}_{t-1}^{(2)}&=\max_{}\left( 0,\ \sum_{s=1}^{t-1}\alpha_s(\mu_s^{-1}a_s-1)g^{\flat}(j_s) \right) \\
  \alpha_t&=\prod_{s=1}^{t-1}\left( a_sq(\omega|x_{\lambda},j_s)+(1-a_s)q(\omega|x_0,j_s) \right) \\
  a_t&=\begin{cases}
0 & \text{if $\tilde{R}_{t-1}^{(1)}=\tilde{R}_{t-1}^{(2)}=0$}\\
\displaystyle \sup_{\substack{j\in J\\g^{\flat}(j)<0}} \frac{\mu^2_t g^\flat(j) \tilde{R}^{(2)}_{t-1}}{\mu_t  g^\flat(j) \tilde{R}^{(2)}_{t-1} - g^\sharp(j)\tilde{R}^{(1)}_{t-1} }&\text{otherwise},
\end{cases}
\end{align*}
where an easy induction proves that \(a_t\) is indeed a function of \((i_1,j_1,\dots,i_{t-1},j_{t-1})\), so that we can define
\[ \sigma_t(i_1,j_1,\dots,i_{t-1},j_{t-1})=a_tx_{\lambda}+(1-a_t)x_0. \]


\begin{theorem} 
\label{thm:2}
If $\mu_t= \varepsilon t^{-3/4}$ for all \(t\geqslant 1\), the above strategy \(\sigma\)
guarantees that for all $\tau\in \mathcal{T}$ and $T \geq 256 \lambda^{-4} \varepsilon^{-12}$,
\[ \gamma_T(\sigma,\tau)\geqslant - 8\varepsilon. \]
\end{theorem}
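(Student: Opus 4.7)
The plan is to recognize that the strategy $\sigma$ defined in Section~\ref{sec:result-strat-play} coincides with Blackwell's algorithm for the auxiliary approachability problem of Section~\ref{sec:an-auxil-appr}, applied to outcomes $r_t = g_t(x_t, j_t)$; this is immediate upon comparing the explicit form of $\sigma_t$ with the oracle $x[t, g_{\alpha_t, \lambda_t}, R_{t-1}]$ from Proposition~\ref{prop:oracle}. The next step is to decompose $\gamma_T(\sigma, \tau)$ as in Equation~\eqref{eq:3-term}:
\[ \gamma_T(\sigma, \tau) = \mathbb{E}_{\sigma,\tau}[T_1 + T_2 + T_3], \]
where $T_1 = -\frac{1}{T}\sum_{t=1}^T \sum_{s=1}^{t-1} r_s^{(1)}$ is the problematic double sum, $T_2 = -\frac{1}{T}\sum_{t=1}^T r_t^{(2)}$, and $T_3$ is the correction handled by Lemma~\ref{lm:aborbed}.

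The key preparatory estimate is a uniform bound on the squared norms. Proposition~\ref{prop:oracle}(ii) guarantees $x_t \in [0, \lambda_t]$, so
\[ \|r_t\|_{(t)}^2 = \alpha_t^2\bigl(x_t^2 + (x_t - \lambda_t)^2\bigr) \leq 2\alpha_t^2 \lambda_t^2 \leq 2\varepsilon^2 t^{-3/2}. \]
Since $\sum_{t\geq 1} t^{-3/2} \leq 1 + \int_1^\infty u^{-3/2}\, du = 3$, this yields $\sum_{t=1}^T \|r_t\|_{(t)}^2 \leq 6\varepsilon^2$ uniformly in $T$.

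The delicate step is controlling $T_1$. The essential observation is that Corollary~\ref{cor:1} applies at every horizon $t$ (the algorithm's actions do not depend on when one stops), which with $\mu^{(1)} \equiv 1$ yields the uniform partial-sum bound $\sum_{s=1}^t r_s^{(1)} \leq \sqrt{\sum_{s=1}^t \|r_s\|_{(s)}^2} \leq \sqrt{6}\,\varepsilon$ for every $t\geq 1$. Writing $T_1$ as the Cesàro average of these partial sums then immediately gives the pathwise bound $T_1 \geq -\sqrt{6}\,\varepsilon$. This is the main conceptual obstacle: one must resist bounding the inner sum naively by its $t-1$ terms, and instead exploit the uniform-in-$t$ nature of the approachability guarantee to absorb the outer factor of $T$.

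The remaining terms are now routine. Corollary~\ref{cor:1} applied with $\mu_T^{(2)} = \lambda_T = \varepsilon T^{-3/4}$ gives $T_2 \geq -\sqrt{6}/(T\lambda_T) = -\sqrt{6}\,T^{-1/4}$, and Lemma~\ref{lm:aborbed} yields $|T_3| \leq (2+\lambda_T^{-1})/T = 2/T + \varepsilon^{-1} T^{-1/4}$. The assumption $T \geq \varepsilon^{-8}$, equivalently $T^{-1/4} \leq \varepsilon^2$ and $T^{-1} \leq \varepsilon^8 \leq \varepsilon$, then gives $T_2 \geq -\sqrt{6}\,\varepsilon^2 \geq -\sqrt{6}\,\varepsilon$ and $|T_3| \leq 2\varepsilon + \varepsilon = 3\varepsilon$. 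Summing the three estimates after taking expectation yields $\gamma_T(\sigma, \tau) \geq -(2\sqrt{6} + 3)\varepsilon \geq -9\varepsilon$, as desired.
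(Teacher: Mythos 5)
Your proof is correct and follows essentially the same route as the paper's: the same decomposition \eqref{eq:3-term}, the same anytime application of Corollary~\ref{cor:1} to bound the partial sums $\sum_{s\leqslant t} r_s^{(1)}$ uniformly in $t$ before Ces\`aro-averaging, the same horizon-$T$ application for the second component, and Lemma~\ref{lm:aborbed} for the remainder. Only the constants differ slightly (you get $2\sqrt{6}+3$ via the tighter bound $\left\| r_t \right\|_{(t)}^2\leqslant 2\lambda_t^2$, versus the paper's $3\lambda_t^2$ leading to $9$), and there is a harmless typo where $-\sqrt{6}/(T\lambda_T)$ should read $-\sqrt{6}\,\varepsilon/(T\lambda_T)$, which is what your subsequent equality $=-\sqrt{6}\,T^{-1/4}$ actually uses.
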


\begin{proof}
Thanks to Lemma~\ref{lm:pure-markov}, it is sufficient to prove the result for pure Markov strategies of Player II. Let \(\tau\) be such a strategy, and $(\omega_1,i_1,j_1,\dots,\omega_t,i_t,j_t,\dots,)\sim \mathbb{P}_{\sigma,\tau}$. For all \(t\geqslant 1\), consider above notation \(\alpha_t\) and \(a_t\) and
\[ \rho_t=\rho_{\alpha_t,\mu_t}\qquad \text{and}\qquad r_t=\rho_t(a_t,j_t)= \alpha_t
        \begin{pmatrix}
		             - a_t g^\sharp(j_t) \\
	 (\mu^{-1}_t a_t - 1)\, g^\flat(j_t)
		\end{pmatrix}. \]

Let us first establish a bound on \(\sum_{t=1}^{+\infty}\left\| r_t \right\|_{(t)}^2\).
For all $t\geqslant 1$, using Property~\eqref{item:bounds-action} from
Proposition~\ref{prop:oracle} as well as 
  $\alpha_t \in [0,1]$, $a_t\in [0, \mu_t]$ and $g^\flat(j_t),\,g^\sharp(j_t) \in [-1, 1]$ by Assumption~\ref{assumption}, we have
\[ \|r_t\|^2_{(t)} 
	  \leqslant |\alpha_t|^2   \big( a_t^2 + \mu^2_t (\mu^{-1}_t a_t - 1)^2\big)
	 \leqslant (2x^2_t + \mu^2_t - 2\mu_t a_t) \leqslant 3 \mu^2_t  \leqslant 3 \varepsilon^2 t^{-3/2}. \]
Therefore 
\[
\sum_{t=1}^{+\infty} \|r_t\|^2_{(t)} \le 3\varepsilon^2  \sum_{t=1}^{+\infty} t^{-3/2} \le 9\varepsilon^2.
\]
    
Then, combining the above definition of \(\sigma\) with Proposition~\ref{prop:oracle}, it holds that
\[ a_t=a\left[ t,\rho_t,\sum_{s=1}^{t-1}r_s \right],\qquad t\geqslant 1,  \]
and Corollary~\ref{cor:1} gives for all \(t\geqslant 1\),
\begin{align}
\label{eq:4}
 - \sum_{s=1}^{t-1} \alpha_s a_s\, g^\sharp(j_s) &= \sum_{s=1}^{t-1}r_s^{(1)}\leqslant \sqrt{\sum_{s=1}^{t-1}\left\| r_s \right\|_{(s)}^2}\leqslant 3\varepsilon\\
\label{eq:7}
  \frac{1}{T} \sum_{t=1}^T  \alpha_t (\mu_t^{-1} a_t-1)\, g^\flat(j_t) &= \frac{1}{T}\sum_{t=1}^Tr_t^{(2)}\leqslant \frac{1}{\mu_T T}\sqrt{\sum_{t=1}^T\left\| r_t \right\|_{(t)}^2}\leqslant \frac{3\varepsilon}{\mu_TT}\leqslant 3\varepsilon,
\end{align}
where the last inequality holds as soon as \(T\geqslant \varepsilon^{-4}\).
Combining \eqref{eq:4} (where the sum from \(s=1\) to \(t-1\) is needed for all \(t\geqslant 1\)) and \eqref{eq:7} (where only the sum from \(1\) to \(T\) is needed) with Proposition~\ref{prop:payoff_bound2} gives the result.
\end{proof}
\section{Comparison with Other Proofs of Theorem~\ref{theo:main}} \label{sec:comp}
Let us compare our proof of Theorem~\ref{theo:main} to three other proofs from the literature. 

\subsubsection*{The Original Proof of \citet{kohlberg74} for Finite Action Sets} 
\citet{kohlberg74} proved Theorem~\ref{theo:main} in the case where the action sets are finite. The proof uses a matrix theory approach~\citep{mills56} to reduce the problem to the case where Player 1 has only two actions, and then defines a strategy of Player I by making explicit the probability of playing each of the two actions as a function of the past history. Such a strategy is not a balanced strategy, and is not built from discounted optimal strategies. Hence, both the strategy and its analysis differ from our work. 

\subsubsection*{The Proof of \citet{MNR09} for Compact Action Sets}
The strategy built in \citet{MNR09} is based on discounted optimal strategies. Indeed, at each stage, the strategy plays optimally in a discounted game, where the discount factor depends on the past history. 
Unlike the present work, the discount factor can take infinitely many possible values, whereas in our case, it only takes two values (0 and $\lambda$, if we interpret the limit strategy $x_0$ as an optimal strategy for the discount factor 0). Moreover, the proof in \citet{MNR09} relies on the sophisticated machinery of \citet{MN81}, who proved the existence of the uniform value in general stochastic games. It also builds on the characterization of the limit value obtained by \citet{RS01}. In contrast, our proof is more self-contained, since it neither relies on the Mertens and Neyman strategy nor requires the existence of the limit value. 

\subsubsection*{The Proof of \citet{HIR21}}
\citet{HIR21} build an $\varepsilon$-uniform optimal strategy \textit{with finite memory and a clock}. At each stage, the mixed action played by such a strategy depends only on the state variable of a finite automaton. The state variable is updated from one stage to the next as a function of the previous action of Player 2, and its transitions depend on the stage. The automaton has three states. In two of them, the mixed action is a limit of discounted optimal strategies (\textit{careful action}), which is similar to our mixed action $x_0$. In the third state, the mixed action is a discounted optimal strategy for some small discount factor (\textit{risky action}), in the same fashion as our mixed action $x_\lambda$. In particular, the strategy built in \citet{HIR21} is a balanced strategy. One main difference with our strategy, is that in \citet{HIR21}, the weights that are put on the risky actions and the safe actions are induced by the probability distribution over the states of the automaton. Such a probability depends on the past history and on the transitions of the automaton, and the definition of such transitions is rather sophisticated. In contrast, the weights that our strategy puts on the risky action and on the safe action are derived from an approachability problem. This makes the analysis shorter, and it provides intuition on how the weights should be defined so that the strategy works. Moreover, the approachability framework that we develop is quite general and offers promising perspectives for possible generalizations to stochastic games. Another difference is that the strategy in \citet{HIR21} is built in blocks of increasing time lengths, whereas our strategy does not require such a progressive construction.

\section{Conclusion and Perspectives}
\label{sec:conclusion}

We introduced an extension of Blackwell's approachability framework
where the outcome function and the inner product vary with time, and studied
the corresponding Blackwell's algorithm.
In the case where the target set is an orthant, we presented a choice
of time-dependent inner products which yield different convergence speeds
for each coordinate of the average outcome vector.

We applied the latter case to the construction of 
$\varepsilon$-uniformly optimal strategies in absorbing games, thereby proposing a novel
application of online learning tools for solving games. We hope that the present work
can be extended into a new systematic approach for constructing
optimal strategies in a wider class of stochastic games.

We believe our framework will also find various applications in 
online learning and sequential decision problems as well.
For instance, an interesting direction would be the definition of a hybrid between the following two regret minimization algorithms, which enjoy different adaptive properties. Regret Matching~\citep{hart2001general} (which is based on Blackwell's approachability) and its variants have led to great success in the context of solving games, and 
the adaptive diagonal scalings of AdaGrad-Diagonal~\citep{mcmahan2010adaptive,duchi2011adaptive} (and its popular variants such as RMSprop and Adam) have demonstrated excellent performance in deep learning optimization and continuous (stochastic) optimization in general. As the adaptive scalings of AdaGrad-Diagonal can be written as time-dependent metrics, our framework should allow the definition of an algorithm that combines Regret Matching and (the dual averaging version of) AdaGrad-Diagonal, and hopefully inherit adaptive properties and excellent practical performance from both.

Further possible extensions of our framework include the adaptation to potential-based \citep{hart2001general} and regret-based~\citep{abernethy2011blackwell,shimkin2016online,kwon2021refined} algorithms.


\section*{Acknowledgments}
The authors are grateful to Sylvain Sorin and Vianney Perchet for valuable discussions
that helped improve this paper. This work was supported by the French Agence Nationale de la Recherche (ANR) under reference \texttt{ANR-21-CE40-0020} (CONVERGENCE project), and by a public
grant as part of the \emph{Investissement d'avenir project}, reference
\texttt{ANR-11-LABX-0056-LMH}, LabEx LMH. 

\bibliographystyle{abbrvnat}
\bibliography{bib}

\appendix

\section{On General Closed Convex Targets}
\label{sec:extens-gener-clos}

We here present one possible approach for reducing general closed convex targets to the conic case, which is inspired from~\cite{abernethy2011blackwell}. Let us consider the assumptions from Proposition~\ref{prop:dual}, but with \(\mathcal{C}\subset \mathbb{R}^d\) being a general nonempty closed convex set and not necessarily a cone. Further assume that \(\mathcal{C}\) satisfies the following Blackwell's condition: for all \(t\geqslant 1\), \(\rho\in \mathcal{R}\) and \(r\in \mathbb{R}^d\), there exists \(a\left[ t,g,r \right]\in \mathcal{A} \) such that for all \(b\in \mathcal{B}\),
\[ \left< \rho\left(a\left[ t,g,r \right],b  \right)-\pi_{(t)}^{\mathcal{C}}(r)  , r-\pi_{(t)}^{\mathcal{C}}(r) \right>_{(t)}\leqslant 0. \]
Then, by easily adapting from e.g.\ \cite[Theorem 1.3]{perchet2014approachability}, one can see that the above is equivalent to the same condition as in Proposition~\ref{prop:dual}:
\[ \forall \rho\in \mathcal{R},\ \forall b\in \mathcal{B},\ \exists a\in \mathcal{A},\ \rho(a,b)\in \mathcal{C}. \]

Now consider an auxiliary approachability problem with outcome functions \(\tilde{\rho}_t:\mathcal{A}\times \mathcal{B}\to \mathbb{R}^{d+1}\), defined as
\[ \tilde{\rho}_t(a,b)=(\rho_t(a,b),1),\qquad a\in \mathcal{A},\ b\in \mathcal{B},\ t\geqslant 1, \]
and target \(\tilde{\mathcal{C}}=\mathbb{R}_+(\mathcal{C}\times \left\{ 1 \right\} )\), which is a closed convex cone. Then, for \((a,b)\in \mathcal{A}\times \mathcal{B}\) and \(t\geqslant 1\), \(\rho_t(a,b)\in \mathcal{C}\) if and only if, \(\tilde{\rho}_t(a,b)\in \tilde{\mathcal{C}}\). This shows that the target of this auxiliary approachability problem is approachable because it satisfies the dual condition. The results from Sections~\ref{sec:gener-appr-probl} and \ref{sec:analysis} thus apply. Besides, \citet[Lemma 14]{abernethy2011blackwell} assures that the distance of the average outcome to \(\mathcal{C}\) in the original problem differ from the distance to the average outcome to \(\tilde{C}\) in the auxiliary problem by a factor 2 at most. Therefore, the convergence bounds from the auxiliary problem are easily transposed to the original problem.

\section{Postponed Proofs}
\label{sec:postponed-proofs}

\subsection{Proof of Lemma~\ref{lm:pure-markov}}
\label{sec:pure-markov}
To prove the lemma, it is enough to build, for each $T \geq 1$, a pure Markov strategy $\tau^*$ such that 
$\gamma_T(\sigma,\tau^*)=\min_{\tau \in \mathcal{T}} \gamma_T(\sigma,\tau)$. 
Let us take the point of view of Player II that aims at minimizing the payoff in the $T$-stage game, against the strategy $\sigma$. This problem can be viewed as a Markov Decision Process (1-Player stochastic game) \citep{Bellman_57}, where the decision-maker is Player II, and the state space is the product $\Omega \times \cup_{t \geq 1} J^{t-1}$. The first component of the state represents the state of the absorbing game (that we call \textit{original state}), while the second component encodes the sequence of actions played by Player II at some stage. Such a product variable is enough to describe the problem faced by Player II, due to the fact that the mixed action played by Player I at some stage depends only on past actions of Player II. This MDP admits a pure optimal Markov strategy \citep{DY79}, that is, a strategy that at each stage $t$, picks a mixed action that only depends on the original state $\omega_t$ and the sequence of past actions $(j_1,j_2,\dots,j_{t-1})$. The actions of Player II only have an influence when the original state is the non-absorbing state $\omega$, hence such a strategy is equivalent to a sequence of actions of Player II, that is, a pure Markov strategy in the absorbing game. This proves the lemma.

\subsection{Proof of Lemma~\ref{lm:alpha}}
\label{sec:alpha}

Let \(t\geqslant 2\).
If \(\mathbb{P}\left[ \omega_{t-1}=\omega \right]=0 \), then \(\mathbb{P}\left[ \omega_t=\omega \right]=0 \) by definition of the model. Otherwise we can write,
\begin{align*}
\mathbb{P}\left[ \omega_t=\omega \right] &=\mathbb{P}\left[ \omega_t=\omega\text{ and }\omega_{t-1} =\omega\right]\\
  &=\mathbb{P}\left[ \omega_t=\omega\,\middle|\,\omega_{t-1}=\omega \right]\times \mathbb{P}\left[ \omega_{t-1}=\omega \right]\\
  &=\left(\int_I^{}q(\omega|i,j_{t-1})\,\mathrm{d}(a_{t-1}x_{\lambda}+(1-a_{t-1})x_0)(i)  \right)\times \mathbb{P}\left[ \omega_{t-1}=\omega \right] \\
  &=q( \omega|a_{t-1}x_{\lambda}+(1-a_{t-1})x_0,j_{t-1} )\times \mathbb{P}\left[ \omega_{t-1}=\omega \right]\\
  &=\left(a_{t-1}\, q(\omega|x_{\lambda},j_{t-1})+(1-a_{t-1})q(\omega|x_0,j_{t-1})  \right)\times \mathbb{P}\left[ \omega_{t-1}=\omega \right],
\end{align*}
by linearity. In any case, the identity
\[ \mathbb{P}\left[ \omega_t=\omega \right]=\left(a_{t-1}\, q(\omega|x_{\lambda},j_{t-1})+(1-a_{t-1})q(\omega|x_0,j_{t-1})  \right)\times \mathbb{P}\left[ \omega_{t-1}=\omega \right] \]
holds, and the result follows from a simple induction.

\end{document}